\bfseries\contentslabel[\thecontentslabel.]{2em}\footnotesize\MakeUppercase}
\renewcommand\tableofcontents{%
  \section*{\centerline{\small\MakeUppercase{\contentsname}}
    \@mkboth
      {\MakeUppercase\contentsname}
      {\MakeUppercase\contentsname}
  }%
  \@starttoc{toc}%
}
\definecolor{LightCyan}{rgb}{0.88,1,1}
\definecolor{Gray}{gray}{0.9}
\newtheorem{theorem}{Theorem}
\newtheorem{lemma}[theorem]{Lemma}
\newtheorem{corollary}[theorem]{Corollary}
\newtheorem{proposition}[theorem]{Proposition}
\newtheorem{example}[theorem]{Example}
\newtheorem{remark}[theorem]{Remark}
\newcommand{\Char}{\operatorname{char}}
\newcommand{\Z}{\mathbb{Z}}
\newcommand{\mf}{\mathfrak{p}}
\newcommand{\Gal}{\operatorname{Gal}}
\date \today
\newcommand{\aprod}{\mathop{\operator@font \hbox{\Large$\ast$}}}
\begin{document}
\sloppy

\title{On the Hasse - Arf property of local fields}
\author{Ioannis Tsouknidas}
\newcommand{\Addresses}{{
  \bigskip
  \footnotesize
  I.~Tsouknidas, \textsc{Beijing Institute of Applied Mathematical and Physical Sciences (BIMSA)}\par\nopagebreak
  \textit{E-mail address:} 
  \texttt{kharga91@gmail.com (preferred),\, iotsouknidas@bimsa.cn}
}}

\maketitle
\tableofcontents

\begin{abstract} Let $F/K$ be a finite Galois totally \& wildly ramified extension of complete discrete valuation fields. We say that the extension has the Hasse - Arf property if the ramification jumps in upper numbering are integers. We give necessary defining equations for $F$ in terms of the ramification jumps. 
 In order for the Hasse - Arf property to hold, these equations become very strict. We prove that the last assertion is an equivalence condition, thus in terms of these defining equations, the Hasse - Arf property becomes an equivalence condition. 
 
\end{abstract}

\section{Introduction}
\renewcommand{\thefootnote}{\fnsymbol{footnote}} 
\footnotetext{\emph{MSC classification:} 
11S15, 11S20. 
\\\emph{Key words:} Ramification theory, Hasse-Arf property.}    
\renewcommand{\thefootnote}{\arabic{footnote}}


 The Hasse - Arf theorem is one of the foundational theorems of number theory. Originally proved by Hasse (\cite{Hasse_1930}, \cite{zbMATH03012900})  and generalized by Arf (\cite{zbMATH02509898}),
 it continues to accumulate interest to these days (see \cite{Fesenko_1995}, \cite{zbMATH07974418}, \cite{elder2025artinschreierwittextensionsramificationbreaks} and the references therein). Probably the most basic but important result for this work is lemma \ref{thm:mainlemma}, which is related to 
the ``special element'' of Sen \cite[prop. pg. 37]{Sen_1969}. This allows an understanding of ramification through elements which are not uniformizing.  

The author would like to thank Beihui Yuan for assistance with calculations using Macaulay2. Aristides Kontogeorgis, Jie Wu, Sergey Oblezin, Yongxiong Li and Beihui Yuan provided helpful discussions. This work is supported by a fund by the Beijing government.

\subsection{An application of Lucas's theorem}
We assume that $0^0=1$.
 This section is needed for lemma \ref{lem:laurentexpansion} which results in lemma \ref{thm:mainlemma}.

 Let $p$ be a prime number and by $v_p$ we denote the $p-$adic valuation of the rational numbers. Let $m$ be a positive and $n$ a non-negative integer. Write $m=\sum_{i=0}^{\mu}m_ip^i$, $n=\sum_{i=0}^{\nu}n_ip^i$ for their base$-p$ expansions (so that $0\leq m_i,n_i<p$ for all $i$). For a binomial coefficient $\binom{m}{n},$ if $n>m$ we set $\binom{m}{n}=0.$

Lucas's theorem \cite[Theorem. 15.22]{Eisenbud:95} states that:
\begin{equation}\label{thm:lucastheorem}
  {m\choose n}\equiv\prod_{i=1}^{\max\{\mu,\nu\}}{m_i\choose n_i}\mod p.
\end{equation}
That in particularly yields that the binomial coefficient ${m\choose n}$ vanishes modulo $p$ if $n_i>m_i$ for some $i.$

If $m$ is negative then the binomial coefficient is defined as:
\begin{equation}\label{eq:binomial}
  {m\choose n}=\frac{m(m-1)\cdots (m-n+1)}{n!}
\end{equation}
for $n>0$ and for $n=0$ we set ${m\choose 0}=1.$

\begin{lemma}\label{lem:luc}
  Let $m\in\mathbb{Z}\backslash\{0\}$. The least \emph{positive} integer $n$ such that ${m\choose n}\not\equiv 0\mod p$ is $p^{v_p(m)}$.
\end{lemma}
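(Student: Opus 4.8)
The plan is to fix $j := v_p(m)$ and establish the two facts (i) $\binom{m}{n}\equiv 0 \pmod p$ for every $n$ with $0<n<p^{j}$, and (ii) $\binom{m}{p^{j}}\not\equiv 0 \pmod p$; together these say precisely that $p^{j}=p^{v_p(m)}$ is the least positive $n$ with $\binom{m}{n}\not\equiv 0\pmod p$.

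I would obtain both at once from the generating series $(1+x)^{m}=\sum_{n\ge 0}\binom{m}{n}x^{n}$, which holds in $\Z[[x]]$ for every $m\in\Z$: for $m<0$ it is the expansion of the inverse of the unit $(1+x)^{-m}$, and its coefficients are exactly the integers defined by \eqref{eq:binomial}. Write $m=p^{j}b$ with $j=v_p(m)\ge 0$ and $p\nmid b$, $b\in\Z\setminus\{0\}$. Since $1+x$ is a unit of $\Z[[x]]$ and $m=p^{j}b$ as integers, $(1+x)^{m}=\big((1+x)^{p^{j}}\big)^{b}$. Reducing modulo $p$ and using that the Frobenius on $\F_p[[x]]$ gives $(1+x)^{p}=1+x^{p}$, hence $(1+x)^{p^{j}}=1+x^{p^{j}}$ in $\F_p[[x]]$, we get
\[
\sum_{n\ge 0}\binom{m}{n}x^{n}\;\equiv\;\big(1+x^{p^{j}}\big)^{b}\;=\;\sum_{k\ge 0}\binom{b}{k}x^{p^{j}k}\pmod p .
\]
Comparing coefficients yields (i), since $\binom{m}{n}\equiv 0\pmod p$ whenever $p^{j}\nmid n$ and in particular for $0<n<p^{j}$; and (ii), since the coefficient of $x^{p^{j}}$ is $\binom{b}{1}=b$, nonzero mod $p$ because $p\nmid b$. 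This finishes the proof.

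An equivalent route avoiding power series, and closer to the tools already set up: for $m>0$ it is immediate from Lucas's theorem \eqref{thm:lucastheorem}, as the base-$p$ digits of $m$ vanish in positions $0,\dots,j-1$ while the digit in position $j$ is nonzero, so the smallest positive $n$ meeting the Lucas condition $n_i\le m_i$ has $n_j=1$ and all other digits zero. For $m<0$, writing $m=-a$ with $a>0$ and using $\binom{-a}{n}=(-1)^{n}\binom{a+n-1}{n}$ (a direct consequence of \eqref{eq:binomial}), one reduces to finding the least positive $n$ for which $\binom{a+n-1}{n}$ is a $p$-adic unit; by Kummer's theorem this is exactly when adding $n$ to $a-1$ produces no carry in base $p$, and since the last $j$ base-$p$ digits of $a-1=p^{j}(b-1)+(p^{j}-1)$ equal $p-1$ while its digit in position $j$ is $(b-1)\bmod p\le p-2$ (as $p\nmid b$), the least such $n$ is again $p^{j}$.

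The only genuine subtlety — the point to handle with care — is the case $m<0$: one must justify that the symbolic $\binom{m}{n}$ of \eqref{eq:binomial} really is the coefficient in the formal inverse $(1+x)^{m}$ and that reduction mod $p$ commutes with inverting the unit $(1+x)^{p^{j}}$; in the alternative argument the corresponding point is the identity $\binom{-a}{n}=(-1)^{n}\binom{a+n-1}{n}$ together with the carry count of Kummer's theorem. Everything else is a short digit bookkeeping.
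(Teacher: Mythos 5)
Your proof is correct, and your primary argument is genuinely different from the paper's. The paper works digit by digit with Lucas's theorem \eqref{thm:lucastheorem}: for $m>0$ it reads off the base-$p$ digits of $m=p^{v_p(m)}\mu$ directly, and for $m<0$ it passes through the same identity $\binom{m}{n}=(-1)^n\binom{-m+n-1}{n}$ you quote and then does the digit bookkeeping for $\mu p^{\xi}+n-1$ versus $n$ by Lucas again. Your main route instead packages everything into the formal identity $(1+x)^m=\sum_n\binom{m}{n}x^n$ in $\Z[[x]]$ (valid for negative $m$ as the negative binomial series, i.e.\ the inverse of the unit $(1+x)^{-m}$), reduces mod $p$, and invokes Frobenius to get $(1+x)^m\equiv(1+x^{p^j})^b$ with $m=p^jb$, $p\nmid b$; comparing coefficients gives both the vanishing for $0<n<p^j$ and the unit value $b$ at $n=p^j$ in one stroke, uniformly in the sign of $m$. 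This is cleaner in that it avoids the separate positive/negative case analysis and, as a bonus, it directly yields $\binom{m}{p^{v_p(m)}}\equiv m/p^{v_p(m)}\pmod p$, which the paper has to extract afterwards in Remark \ref{remark:Luc} (including the small $(-1)^{p^{\xi}}$ discussion); the price is that you must justify (as you note) that the coefficients of the formal inverse agree with the integers of \eqref{eq:binomial}, which is the standard negative binomial series and unproblematic. Your alternative sketch is essentially the paper's argument for $m>0$, and for $m<0$ it substitutes Kummer's carry criterion for the paper's Lucas digit comparison; the carry computation with $a-1=p^j(b-1)+(p^j-1)$ is correct, so that route also goes through.
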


\begin{proof} If $m>0$, writing $m=p^{v_p(m)}\mu$ with $\mu>0$ prime to $p$, and applying Lucas's theorem, gives immediately that for $0<n<p^{v_p(m)}$ the binomial ${m\choose n}$ is zero modulo $p.$ We calculate now for $n=p^{v_p(m)}.$ If $\mu=\sum_{i=0}\mu_ip^i$ with $0\leq\mu_i<p,$ and $\mu_0\neq 0$ (since $\mu$ was assumed prime to $p$) then by Lucas's theorem:
\[
  {m\choose p^{v_p(m)}}\equiv {\mu_0\choose 1}\cdot \prod_{i=1}{\mu_i\choose 0}=\mu_0\not\equiv 0\mod p.
\]

Now let $m<0$ and $v_p(m)=\xi$ so that $m=-\mu p^\xi$ for a \emph{positive} integer $\mu$, prime to $p.$ We need to show that the least positive integer $n$ such that ${m\choose n}$ is nonzero is $p^\xi.$ We write:
  \[
    {m\choose n}=(-1)^n{-m+n-1\choose n}=(-1)^n{\mu p^\xi+n-1\choose n}.
  \]
  Notice that the binomial on the right side of last equality has positive factors and therefore Lucas's theorem can be applied. We first show that for $0<n<p^\xi$ the binomial coefficient ${\mu p^\xi+n-1\choose n}$ is zero.

   Since $0<n<p^\xi,$ writing the $p-$base expansion of $n$ and $n-1;$ $n=\sum_{0}^{<\xi}n_ip^i$ and $n-1=\sum_{0}^{<\xi}n_i'p^i,$ we see that $n_i>n_{i'}$ for at least one $i$. Also $\mu p^\xi$ does not affect that since $\mu$ is positive and then by Lucas's theorem, ${\mu p^\xi+n-1\choose n}$ is indeed zero modulo $p$ for $0<n<p^\xi=p^{v_p(m)}.$

It remains to show that ${m\choose p^\xi}={\mu p^\xi+p^\xi-1\choose p^\xi}$ is non-zero. Let $\mu=\sum_{i=0}^{<\infty}\mu_ip^i$ be the base-$p$ expansion of $\mu.$ Since $p\nmid \mu$, $\mu_0$ is non-zero.
  The $p-$adic expansion of $\mu p^\xi+p^\xi-1$ is:
  \[
    p^\xi-1+\mu p^\xi=(p-1)p^0+\dots+ (p-1)p^{\xi-1}+\mu_0p^\xi+\sum_{i=1}\mu_ip^{i+\xi},
  \]
  while $p^\xi$ is $0+0p+\dots+1p^{\xi}+0.$
  By Lucas's theorem:
  \[
  {m\choose p^\xi}=(-1)^{p^\xi}{\mu p^\xi+p^\xi-1\choose p^\xi}\equiv(-1)^{p^\xi}{\mu_0\choose 1}=(-1)^{p^\xi} \mu_0\not\equiv 0\mod p.
  \]
  Notice that the integer $\mu_0$ in the second case of this proof (for negative $m$) is $\mu_0\equiv-m/p^{\xi}\mod p$ and $\xi=v_p(m).$\end{proof}

\begin{remark}\label{remark:Luc}
  For  
   a negative integer $m$ we have shown that:
  \[
  {m\choose p^{v_p(m)}}\equiv(-1)^{p^{v_p(m)}+1}\frac{m}{p^{v_p(m)}} \mod p. 
  \]
  Since $(-1)^{p^{v_p(m)}+1}$ equals $1$ for all primes $p,$ we have shown that for $m\in\mathbb{Z}\backslash\{0\}:$
  $${m\choose p^{v_p(m)}}\equiv \frac{m}{p^{v_p(m)}} \mod p.$$
\end{remark}

\begin{corollary}\label{cor:lucas}
  Let $p$ denote either a prime number or \emph{zero}. We denote by $v_{p}$ the $p-$adic valuation of $p$ if $p\neq 0$ and define $v_0$ to be the zero function. We make the convention $0^0=1.$ Then for $m\in\mathbb{Z}\backslash\{0\}$ the following holds:
  \[
  {m\choose p^{v_p(m)}}\equiv\frac{m}{p^{v_p(m)}}\mod p.
  \]
  \end{corollary}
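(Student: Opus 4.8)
The plan is to reduce Corollary~\ref{cor:lucas} to the two cases already settled in Lemma~\ref{lem:luc} and Remark~\ref{remark:Luc}, treating the new ingredient---allowing $p=0$---separately as a triviality. First I would dispose of the case $p=0$: by definition $v_0$ is the zero function, so $p^{v_p(m)}=0^0=1$ by the stated convention, and the congruence ``$\mod 0$'' is to be read as genuine equality of integers. Thus the claim becomes $\binom{m}{1}=m/1=m$, which holds for every $m\in\Z\setminus\{0\}$ by the very definition of the binomial coefficient (equation~\eqref{eq:binomial} for $m<0$, and the obvious value for $m>0$). So the $p=0$ case needs only a sentence.

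Next I would handle $p$ a genuine prime. Here the content is exactly Remark~\ref{remark:Luc} together with the positive-$m$ computation inside the proof of Lemma~\ref{lem:luc}. For $m<0$, Remark~\ref{remark:Luc} already states $\binom{m}{p^{v_p(m)}}\equiv m/p^{v_p(m)}\bmod p$, so nothing remains. For $m>0$, write $m=p^{v_p(m)}\mu$ with $p\nmid\mu$ and $\mu_0$ the last base-$p$ digit of $\mu$; the proof of Lemma~\ref{lem:luc} shows $\binom{m}{p^{v_p(m)}}\equiv\mu_0\bmod p$ via Lucas's theorem~\eqref{thm:lucastheorem}. I would then observe that $\mu_0\equiv\mu\bmod p$ trivially (it is the residue of $\mu$), and $\mu=m/p^{v_p(m)}$, giving $\binom{m}{p^{v_p(m)}}\equiv m/p^{v_p(m)}\bmod p$ as desired. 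Assembling the two sign cases and the $p=0$ case yields the uniform statement.

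I do not anticipate a genuine obstacle here---the corollary is essentially a repackaging of the lemma and remark into a single formula covering all $m\neq 0$ and all $p$ (prime or zero) at once. The only point requiring a little care is making sure the conventions line up: that ``$\mod 0$'' means equality, that $0^0=1$ is in force so $p^{v_0(m)}=1$, and that $m/p^{v_p(m)}$ is genuinely an integer in the $p$-prime case (which it is, since $v_p(m)$ is by definition the exact power of $p$ dividing $m$). Once these conventions are spelled out, the proof is a two- or three-line citation of the earlier results.
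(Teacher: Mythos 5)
Your proposal is correct and follows essentially the same route as the paper, which simply invokes Remark~\ref{remark:Luc} for prime $p$ and calls the case $p=0$ immediate. You merely spell out the details more explicitly (the convention $0^{0}=1$ for $p=0$, and the observation $\mu_0\equiv\mu\bmod p$ for positive $m$), which the paper leaves implicit.
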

  \begin{proof}
    For $p$ a prime number it is remark \ref{remark:Luc}. If $p=0$ it is immediate.
  \end{proof}

\begin{lemma}\label{lem:padicbinomial}
  If $0<i<m$ are positive integers then:
  \[
  v_p\binom{m}{i}\geq v_p(m)-v_p(i)=v_p\left(\frac{m}{i}\right).
  \]
\end{lemma}
\begin{proof}
  We know that:
  \[
  \binom{m}{i}=\frac{m}{i}\binom{m-1}{i-1},
  \]
  and the binomial coefficient $\binom{m-1}{i-1}$ is an integer, therefore it has nonnegative $p-$adic valuation, therefore:
  \[
  v_p\binom{m}{i}=v_p\left(\frac{m}{i}\right)+v_p\binom{m-1}{i-1}\geq v_p\left(\frac{m}{i}\right).
  \]
\end{proof}

\section{Ramified extensions of complete discrete valuation fields}
Let $K$ be a complete discrete valuation field with 
valuation ring (resp. valuation, resp. maximal ideal) $A_K$ (resp. $v_K,$ resp $\mathfrak{p}_K$).
 We assume always that the residue field has positive characteristic $p$ and $v_p$ denotes the $p-$adic valuation of the rational numbers. 
\subsubsection*{Between equal and mixed characteristic}

If $K$ has positive characteristic and $m$ is a nonzero integer we identify $\binom{m}{n}$ with the corresponding element as in Lucas's theorem (\ref{thm:lucastheorem}), under the usual convention that $\binom{m_i}{n_i}=0$ when $n_i>m_i.$ In that way, the binomial coefficient $\binom{m}{n}$ is well defined in a field of arbitrary characteristic for $m\in\Z\backslash\{0\}$ and nonnegative integer $n.$

\begin{lemma}\label{lem:laurentexpansion}
Let $m\in\Z\backslash\{0\}$ and  $a\in \mf_K$. If $\Char K>0$ or $(m,p)=1$ then the element:
  \[
  (1+a)^m=1+\frac{m}{p^{v_p(m)}}a^{p^{v_p(m)}}+\,\aprod
  \]
  lies in $K$ and  ``$\aprod$'' stands for elements with greater valuation.
\end{lemma}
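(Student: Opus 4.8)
The plan is to treat the expansion of $(1+a)^m$ as a Laurent/power series identity that we already understand, and then read off the lowest-order nontrivial term using the combinatorial input from the previous section. Concretely, when $\Char K > 0$ (so in particular $K \supseteq \F_p$) or when $(m,p)=1$, the binomial coefficients $\binom{m}{n}$ are well-defined elements of $K$ for every $n \geq 0$ (in positive characteristic via the convention fixed just before the lemma, using Lucas's theorem; in characteristic zero they are ordinary integers mapped into $K$). Since $a \in \mf_K$ and $K$ is complete, the series $\sum_{n\geq 0} \binom{m}{n} a^n$ converges in $K$, and the usual formal identity $(1+a)^m = \sum_{n \geq 0}\binom{m}{n} a^n$ holds — for $m>0$ this is the finite binomial theorem; for $m<0$ it is the standard expansion of $(1+a)^{-|m|}$, whose validity in a complete DVR follows from multiplying the series for $(1+a)^{|m|}$ and $(1+a)^{-|m|}$ and checking the product is $1$ coefficient-by-coefficient, a purely formal computation over $\Z$ (or $\F_p$) that is insensitive to the characteristic.

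**Next I would** identify which terms actually appear. The $n=0$ term is $\binom{m}{0}a^0 = 1$. For $0 < n < p^{v_p(m)}$, Lemma \ref{lem:luc} gives $\binom{m}{n} \equiv 0 \bmod p$; since in the relevant cases $K$ contains $\F_p$ (positive characteristic) these coefficients literally vanish, while if $(m,p)=1$ we have $v_p(m)=0$ and there are simply no such $n$, so this range is empty and the claim is vacuous there. The first surviving term is therefore $n = p^{v_p(m)}$, with coefficient $\binom{m}{p^{v_p(m)}}$, which by Corollary \ref{cor:lucas} (covering both the prime and the $p=0$ case uniformly) equals $m/p^{v_p(m)}$ as an element of $K$ — note this makes sense precisely because $m/p^{v_p(m)}$ is an integer. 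All remaining terms have $n > p^{v_p(m)}$, hence $a^n$ has strictly larger valuation than $a^{p^{v_p(m)}}$ (here we use $v_K(a) \geq 1 > 0$, so $v_K(a^n)$ is strictly increasing in $n$), and these get absorbed into "$\aprod$".

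**The main point requiring care** — and the only real obstacle — is the legitimacy of the infinite expansion when $m<0$: one must confirm that $\sum_n \binom{m}{n}a^n$ genuinely represents $(1+a)^m$ in $K$, i.e. that it is the multiplicative inverse of the polynomial $(1+a)^{|m|}$. This is where completeness of $K$ is used (to guarantee convergence, since $v_K(\binom{m}{n}a^n) \to \infty$), together with the formal Cauchy-product identity $\big(\sum_j \binom{|m|}{j}a^j\big)\big(\sum_n \binom{m}{n}a^n\big) = \sum_k \big(\sum_{j} \binom{|m|}{j}\binom{m}{k-j}\big) a^k = 1$, the inner sum being Vandermonde's $\binom{0}{k} = [k=0]$. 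I would state this identity holds over $\Z$ and hence in any field, so no characteristic hypothesis intrudes here; the characteristic/coprimality hypothesis is needed only to make $\binom{m}{n}$ a well-defined element of $K$ in the first place (in mixed characteristic with $p \mid m$ the coefficient $\binom{m}{1} = m$ already has positive valuation but is not "$m/p^{v_p(m)}$ times a unit" in the naive sense the statement wants). Finally I would remark that "lies in $K$" is clear since each partial sum lies in $A_K \subseteq K$ and $K$ is complete, so the limit lies in $K$.
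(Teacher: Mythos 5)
Your proof is correct and follows essentially the same route as the paper: expand $(1+a)^m$ as the convergent binomial series, use Lemma \ref{lem:luc} to kill the coefficients with $0<n<p^{v_p(m)}$, identify the leading coefficient via Corollary \ref{cor:lucas}, and dispose of the tail by comparing valuations (the paper's characteristic-zero case is exactly your ``range is empty, compare $v_K(ma)$ with $v_K\binom{m}{i}+iv_K(a)$'' step). The only difference is that you spell out, via the Cauchy product and Vandermonde convolution, why the series really equals $(1+a)^m$ for negative $m$ --- a point the paper's proof leaves implicit --- which is a welcome extra precision rather than a different method.
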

In other words, if $\Char K>0$ or, $\Char K=0$ and $v_p(m)=0,$ then   $(1+a)^m\equiv 1+\frac{m}{p^{v_p(m)}}a^{p^{v_p(m)}}\mod \mathfrak{p}_K^{p^{v_p(m)}+1}.$

\begin{proof}
  Since $K$ is complete and $v_K(a)>0,$ the sequence $\{a^n\}_{n\geq 0}$ tends to zero and the series $\sum_{n=0}^{\infty}{m\choose n}a^n$ lies in $K$. 
  \begin{description}
     
     \item[If char$ \bm{K>0}$ :]  It was shown in lemma \ref{lem:luc} and corollary \ref{cor:lucas}. 
     \item[If char$ \bm{K=0}$ :] We have that $v_p(m)=0$ and we want to show that for $i>1$:
     \[
  v_K(ma)\lneq v_K\left(\binom{m}{i}\right)+v_K(a^i).
  \]
  Write $v_K(ma)= e_0\cdot v_p(m)+v_K(a)=v_K(a).$ 
  On the other hand, the binomial on the right side is always an integer (regardless $m$ being positive or not) so that its $p-$adic valuation is nonnegative and $v_K(a^i)=iv_K(a)>v_K(a)$ since $i>1.$ \end{description}
\end{proof}

\subsection{Totally ramified extensions}
 We consider now a \textbf{finite Galois} extension $F/K$ with Galois group $\Gal(F/K).$ Denote by $A_F$ (resp. $v_F,$ resp. $\mf_F$) the valuation ring (resp. the valuation, resp. the maximal ideal) of $F.$ Write $e$ for the ramification index of $F/K.$
 One can study the ``higher'' ramification groups (those of nonnegative index) by replacing $K$ with the largest unramified subextension of $F$ over $K$ (\cite[IV.§1, 1st corollary]{serre2013local}). Since this is our purpose, we assume that $F/K$ is \textbf{totally ramified:} $[F:K]$ equals the ramification index of $F/K.$ Denote by $\pi_F,$ $\pi_K$ the uniformizers of $A_F$ and $A_K.$ As before, $e$ is the ramification index of $F$ over $K.$ The elements $1,\pi_F,\dots,\pi_F^{e-1}$ form a basis of $A_F$ over $A_K$ (\cite[III.§6, prop. 12]{serre2013local}).

%

\subsubsection*{Ramification groups}
As before, $F/K$ is a finite Galois extension of complete discrete valuation fields and is totally ramified, so that the residue fields $\bar{F},$ $\bar{K}$ are equal and the ramification index equals the degree of the extension.
  The \textbf{ramification groups} of the extension are defined for $i\geq-1$ as:
\[
  G_i:=\{\sigma\in G: v[\sigma(\pi_F)-\pi_F]\geq i+1\}.
\]

For $\sigma\in G$ denote by $i_G(\sigma)$ the integer  $v_F[(\sigma(\pi_F)-\pi_F)/\pi_F].$ We always write $p$ for the positive characteristic of the residue field. The following lemma is related to 
\cite[prop. pg. 37]{Sen_1969}. 
\begin{lemma}\label{thm:mainlemma}
Let $\sigma\in G_1$ 
 and $f\in F\backslash\{0\}.$ If $v_F(f)$ is nonzero and prime to $p$ 
 then: 
 $$v_F[\sigma(f)-f]= v_F(f)+i_G(\sigma).$$
  
\end{lemma}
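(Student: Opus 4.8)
The plan is to write $f=u\pi_F^{n}$ with $n=v_F(f)$ and $u$ a unit, reduce the computation to the pure power $\pi_F^{n}$, and check that the unit factor $u$ perturbs $\sigma(f)/f$ only to strictly higher order. Throughout I may assume $\sigma\neq 1$ (otherwise both sides are $+\infty$), and I set $i:=i_G(\sigma)$, a finite integer with $i\geq 1$ because $\sigma\in G_1$. Writing $\sigma(\pi_F)=\pi_F(1+a)$ with $a:=(\sigma(\pi_F)-\pi_F)/\pi_F$, one has $v_F(a)=i\geq 1$, so $a\in\mf_F$.

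First I would treat $\pi_F^{n}$. Since $(n,p)=1$ we have $v_p(n)=0$ and $v_F(n)=0$, so applying Lemma~\ref{lem:laurentexpansion} to the complete discrete valuation field $F$ with $m=n$ yields
\[
\sigma(\pi_F^{n})/\pi_F^{n}=(1+a)^{n}=1+na+\aprod,
\]
where $v_F(na)=v_F(a)=i$ and $\aprod$ collects terms of valuation $>i$; hence $v_F\bigl((1+a)^{n}-1\bigr)=i$ and in particular $v_F(\sigma(\pi_F^{n})-\pi_F^{n})=n+i$, which is the assertion for $f=\pi_F^{n}$. (Negative $n$ is allowed, since Lemma~\ref{lem:laurentexpansion} permits $m\in\Z\setminus\{0\}$ and $\sigma(\pi_F^n)=\sigma(\pi_F)^n$ for all $n\in\Z$.)

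Next I would control the unit. Put $u:=f\pi_F^{-n}$, so $v_F(u)=0$, and expand $u=\sum_{j=0}^{e-1}c_j\pi_F^{j}$ in the integral basis $1,\pi_F,\dots,\pi_F^{e-1}$ of $A_F$ over $A_K$ from \cite[III.\S6, prop. 12]{serre2013local}, with $c_j\in A_K$. As $\sigma$ fixes $K$ pointwise,
\[
\sigma(u)-u=\sum_{j=1}^{e-1}c_j\,\pi_F^{j}\bigl((1+a)^{j}-1\bigr),
\]
and for each $j\geq 1$ every term of $(1+a)^{j}-1=\sum_{k=1}^{j}\binom{j}{k}a^{k}$ has valuation $\geq v_F(a)=i$ (using $v_F(j)\geq 0$ and $i\geq 1$); therefore $v_F(\sigma(u)-u)\geq 1+i$, i.e.\ $\sigma(u)/u=1+b$ with $v_F(b)\geq i+1$. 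Finally I would combine the two pieces:
\[
\sigma(f)/f=\frac{\sigma(u)}{u}\cdot\frac{\sigma(\pi_F^{n})}{\pi_F^{n}}=(1+b)(1+na+\aprod),
\]
and since $v_F(na)=i$ while $b$, the $\aprod$, and all cross terms have valuation $>i$, we get $v_F(\sigma(f)/f-1)=i$, whence $v_F(\sigma(f)-f)=v_F(f)+v_F\bigl(\sigma(f)/f-1\bigr)=v_F(f)+i=v_F(f)+i_G(\sigma)$.

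The step I expect to be the crux is the unit estimate: it is exactly the hypotheses $\sigma\in G_1$ (forcing $i\geq 1$) and $A_F=A_K[\pi_F]$ that upgrade $v_F(\sigma(u)-u)\geq i$ to the strict $v_F(\sigma(u)-u)\geq i+1$ needed so that $u$ does not interfere with the leading term. Dually, the hypothesis $p\nmid v_F(f)$ is what makes Lemma~\ref{lem:laurentexpansion} produce a leading perturbation of valuation precisely $i$: if $p\mid n$ the leading term would instead be $\frac{n}{p^{v_p(n)}}a^{p^{v_p(n)}}$, of valuation $p^{v_p(n)}i>i$, and the stated equality would break. Everything else is routine valuation bookkeeping resting on Lemma~\ref{lem:laurentexpansion}, hence ultimately on Lucas's theorem.
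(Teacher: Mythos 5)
Your proof is correct and follows essentially the same route as the paper: factor $f=\pi_F^{v_F(f)}\cdot(\text{unit})$, apply Lemma~\ref{lem:laurentexpansion} (with $v_p(v_F(f))=0$) to get the leading perturbation $v_F(f)\cdot a$ of exact valuation $i_G(\sigma)$, and check that the unit only contributes terms of valuation at least $i_G(\sigma)+1$. The only difference is that you derive the unit estimate $v_F(\sigma(u)-u)\geq i_G(\sigma)+1$ directly from the integral basis $1,\pi_F,\dots,\pi_F^{e-1}$, whereas the paper simply invokes this standard property of $\sigma\in G_i$ acting on $A_F$; this is a harmless (indeed self-contained) elaboration of the same step.
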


\begin{proof}
If $\sigma=1$ we have infinity on both sides. 
 For the uniformizer $\pi_F$ we have $v_F(\sigma(\pi_F)-\pi_F)=1+i_G(\sigma)$ and we write:
\[
  \sigma(\pi_F)=
  \pi_{F}\cdot(1+a)\text{ with }a\in\mf^{i_G(\sigma)}.
\]
From now and until the end of this proof we write $i$ for $i_G(\sigma).$ 
 Let $f\in F$ under our assumptions. Then $f=\pi_F^{v_F(f)}\mu,$ $v_F(\mu)=0$ and:
\[
  \sigma(f)=\sigma(\pi_F)^{v_F(f)}\sigma(\mu).
\]
We calculate separately:
\begin{description}
    \item[For $\bm{\sigma(\pi_F)^{v_F(f)}}$:]
     We have $\sigma(\pi_F)^{v_F(f)}=\pi_F^{v_F(f)}(1+a)^{v_F(f)}.$ Recall that $\sigma\in G_1$ so $i=i_G(\sigma)\geq 1,$ hence $v_F(a)=i=i_G(\sigma)>0$ and $v_F(f)$ is nonzero and prime to $p,$ therefore lemma \ref{lem:laurentexpansion} applies and gives that:
    \[
    (1+a)^{v_F(f)}=1+v_F(f)\cdot a+\,\aprod
    \]
    where ``$\aprod$'' denotes elements of greater valuation. Multiplication by $\pi^{v_F(f)}$ gives:
    \[
     \sigma(\pi_F)^{v_F(f)}=\pi_F^{v_F(f)}+v_F(f)\cdot a\cdot \pi_F^{v_F(f)}+\,\aprod
    \]
    where ``$\aprod$'' still denotes greater valuation terms.
    \item[For $\bm{\sigma(\mu)}$:] It is $v_F(\sigma(\mu)-\mu)\geq i+1$ and we write:
    \[
    \sigma(\mu)=\mu+\pi_F^{i+1}\mu',\quad v_F(\mu')\geq 0.
    \]
\end{description}
Putting these together we get:
\begin{align*}
  \sigma(f)&=\sigma(\pi_F)^{v_F(f)}\cdot \sigma(\mu)
  =
  \left(\pi_F^{v_F(f)}+v_F(f)\cdot a\cdot \pi_F^{v_F(f)}+\,\aprod\right)
  \cdot
  (\mu+\pi_F^{i+1}\mu')
  =
  \\
&
=
\pi_F^{v_F(f)}\cdot\mu+v_F(f)\cdot a\cdot \pi_F^{v_F(f)}\cdot \mu+ \pi_F^{v_F(f)+i+1}\mu'
+
v_F(f)\cdot a\cdot \pi_F^{v_F(f)+i+1} \mu'+\,\aprod.
\end{align*}
Then $\sigma(f)-f$ is:
\[
  \sigma(f)-f=v_F(f)\cdot a\cdot \pi_F^{v_F(f)} \cdot\mu+ \pi_F^{v_F(f)+i+1}\cdot \mu'+v_F(f)\cdot a\cdot \pi_F^{v_F(f)+i+1} \cdot\mu'+\,\aprod,
\]
where ``$\aprod$'' still denotes terms with greater valuation. We compare the valuations of the first three terms.

 \begin{itemize}
   
   \item  The valuation of $v_F(f)\cdot a\cdot \pi_F^{v_F(f)}\cdot \mu$ is:
  \begin{equation}\label{equation:1}
    0+i+v_F(f)+v_F(\mu)
    =
    i+v_F(f), 
  \end{equation}
  where the valuation of $v_F(f)$ is zero regardless if $\Char F$ is zero or positive (for $\Char F=0,$ recall that $v_F(f)$ is a prime to $p$ integer)  and also $v_F(\mu)
  =0.$
  \item The valuation of $\pi_F^{v_F(f)+i+1}\cdot \mu'$ is:
  \begin{equation}\label{equation:2}
    v_F(f)+i+1+v_F(\mu')\geq v_F(f)+i+1.
  \end{equation}
  \item The valuation of $v_F(f)\cdot a\cdot \pi_F^{v_F(f)+i+1}\cdot \mu'$ is:
  \begin{equation}\label{equation:3}
    0+i+v_F(f)+i+1+v_F(\mu').
  \end{equation}
 \end{itemize}
 Since $\mu'$ has nonnegative valuation, the (strictly) minimal valuation is that of equation \ref{equation:1}: $i+v_F(f)=i_G(\sigma)+v_F(f).$\end{proof}
\subsection{Ramification jumps}

As before consider a finite Galois extension $F/K$ of complete discrete valuation fields. The inertia subgroup of $G,$ i.e. $G_0,$ is a semidirect product of a cyclic prime to $p$ subgroup by a $p-$subgroup. If the characteristic $p$ of the residue field $\bar{K}$ of $K$ is zero then $G_1=\{1\}.$ We always assume $p$ to be positive. 
 We also assume that $F/K$ is totally ramified, so that $\Gal(F/K)=G_0.$
 If $G_j>G_{j+1}$ we say that $j$ is a \emph{jump} of the ramification filtration.
Since $p$ is positive, the group $G_1$ admits a filtration by elementary abelian extensions:
\[
  G_0\geq G_1= G_{b_1}\gneq G_{b_2}\gneq\dots\gneq G_{b_{h}}\gneq G_{b_{h+1}}=\{1\},
\] 
and $G_{b_i}/G_{b_{i+1}}\simeq (\mathbb{Z}/p\mathbb{Z})^{n_i},$ where $n_i\geq 1.$

The quotient $G/G_1$ amounts for the existence of tame ramification in the extension $F/K.$ Denoting the fixed field $F^{G_1}$ of $F$ by $G_1$ with $F_1$ (so that $K<F_1<F$) it is known 
that for any choice of uniformizers $\pi_K,$ $\pi_{F_1}$ of $K$ and $F_1$ respectively, there are units $\nu\in A_{F_1}$ and $\lambda\in A_K$ so that $(\pi_{F_1}\cdot \nu)^{[F_1:K]}=\pi_K\cdot \lambda.$

In what follows, \textbf{we deal exclusively with non tame (i.e. wild) ramification.} This is done by replacing the extension $F/K$ with $F/F_1=F/F^{G_1}.$ Since a ramification jump at zero does not affect the upper ramification filtration, this approach is sensible for our study.

 After discarding tame ramification,  we deal with the subextension $F/F_1$ which corresponds to the normal subgroup $G_1=G_{b_1}=\Gal(F/F_1)$ of $\Gal(F/K).$ The former is the $p-$part of the latter. We write $G$ for $\Gal(F/F_1).$ 
Set $F_i=F^{G_{b_i}}.$ Since every ramification group is normal in $G$ we have a sequence of Galois extensions:
\[
  K\leq F_{1}\lneq \dots\lneq F_{h}\lneq F_{h+1}=F.
\]
As before, denote by $A_{F_i}$ (resp. $\pi_{F_i}$) the valuation ring (resp. the uniformizer) of $F_i.$

Regarding the divisibility of the ramification jumps, we need to distinguish between two cases: 
\begin{itemize}
\item No jump is divisible by $p=\Char \bar{K},$ or
\item $p$ divides all the jumps (see \cite[IV.2. prop. 11]{serre2013local}). In this case, the characteristic of $F$ is zero, the extension $F/F_1$ is cyclic of order $p^h$ (where $h$ is the number of the jumps) and $p\mid b_1=\frac{b_i}{p^{i-1}}$ for all $i=2,\dots,h$ (see \cite[IV.2 ex.3]{serre2013local}).
\end{itemize}

Let us first deal with the last case, when $p=\Char\bar{K}$ divides the ramification jumps. Write $e_0(F_1)$ and $e_0(F)$ for the absolute ramification indices of $F_1=F^{G_1}$ and $F$ respectively. By  \cite[III.2 prop. 3]{zbMATH0179379}, the ramification filtration of $F/F_1$ is:
\[
  \frac{p\cdot e_0(F_1)}{p-1}=b_1\lneq p\cdot b_1=b_2\lneq\dots\lneq b_h=\frac{e_0(F)}{p-1}.
\]
Furthermore, as the extension $F/F_1$ is given by cyclic extensions of degree $p,$ there are subfields $F_i<F$ corresponding to the subgroups $G_{b_i}$ such that $F_i=F_{i-1}(f_i),$ where $f_i^p=a_i\in F_{i-1}\backslash\{0\}$ and $a_i\neq (1+\pi_{F_{i-1}}u)^p$ for all $u\in A_{F_{i-1}}.$

After dealing with this case, we assume from now on that $p=\Char\bar{K}$ \textbf{does not divide any ramification jump,} i.e. $(p,b_i)=1$ for all $i=1,\dots,h.$
Let us collect here the numerical relations between the various subgroups and subfields. For $i=1,\dots, h$ the following hold:
\begin{itemize}
  
  \item $\Gal(F_{i+1},F_{i})=G_{b_i}/G_{b_{i+1}}\simeq (\mathbb{Z}/p\mathbb{Z})^{n_i},\quad\quad$ $[F_{i+1}:F_{i}]=p^{n_i},$

  \item $[F:F_{i}]=p^{n_i+\dots+n_h}$ and $[F_i:K]=p^{n_1+\dots+n_{i-1}}.$
\end{itemize}

\subsubsection*{Structure of intermediate fields}

For each $i=1,\dots,h$ the field $F_i=F^{G_{b_i}}$ is a local field and a finite Galois extension of $F_1.$ The same holds when we consider $F_{i+1}$ over $F_{i}.$ The extension $F_{i+1}/F_i$ is again totally ramified, and considering its ramification filtration, it has a single jump (otherwise $G_{b+1}/G_{b_i}$ would have more than one jumps). We denote the normalized valuation of $F_i$ by $v_i.$ When $v_{h+1}$ appears it means the valuation $v_F$ of $F.$ The following hold:
\begin{itemize}
\item  $v_F=p^{n_{i}+\dots+n_h}\cdot v_i $ and $v_i=p^{n_1+\dots+n_{i-1}}\cdot v_{F_1}.$ 
\item The extension $F_i/F_1$ corresponds to the quotient $G/G_{b_i}$ and has ramification filtration:
\[
  \left(\frac{G}{G_{b_i}}\right)=
  \left(\frac{G_{b_1}}{G_{b_i}}\right)=
  \left(\frac{G}{G_{b_i}}\right)_{b_1}\gneq
  \left(\frac{G}{G_{b_i}}\right)_{b_2}=
  \left(\frac{G_{b_2}}{G_{b_i}}\right)
  \gneq\dots\gneq
  \left(\frac{G}{G_{b_i}}\right)_{b_i}=
  \left(\frac{G_{b_i}}{G_{b_i}}\right)
  =\{1\}.
\]
Its lower ramification jumps are $b_1,\dots, b_{i-1}.$
\item 
The group $\Gal(F_i/F_{i-1})$ has a unique jump and in particular:
\[
  \Gal(F_i/F_{i-1})\simeq \frac{G_{b_{i-1}}}{G_{b_i}}
  =
  \left(\frac{G_{b_{i-1}}}{G_{b_i}}\right)_0=
  \left(\frac{G_{b_{i-1}}}{G_{b_i}}\right)_{b_{i-1}}\gneq \left(\frac{G_{b_{i-1}}}{G_{b_i}}\right)_{b_{i-1}+1}=\{1\}.
\]
\end{itemize}
\begin{minipage}[l]{0.6\textwidth}
These are explained in great detail in \cite[III.3.6]{zbMATH0179379} and also in \cite[pg. 62-64]{serre2013local}.

 For each $i=1,\dots,h,$ let $f_i\in F_{i+1}$ such that 
 $$\boxed{v_{i+1}(f_i)=-b_i.}$$


\begin{remark}
  We are going to use lemma \ref{thm:mainlemma}. Since $f_{i}$ is in $F_{i+1},$ we will apply the theorem there. Recall that all jumps are assumed to be \emph{prime to $p$.}

      Let us review the situation. $F_{i+1}$ is an extension of both $F_1$ and $F_{i}$ with Galois groups:
  \begin{align}
   \tag{$\star$}\label{[Fi+1/F_1]} \Gal(F_{i+1}/F_1)&
   \simeq 
   \frac{\Gal(F/K)}{\Gal(F/F_{i+1})}
   \simeq
    \frac{G}{G_{b_{i+1}}},
    \\
    \notag\quad \Gal(F_{i+1}/F_{i})&\simeq \frac{\Gal(F/F_{i})}{\Gal(F/F_{i+1})}\simeq \frac{G_{b_{i}}}{G_{b_{i+1}}}.
  \end{align}
  \end{remark}
    When working in the extension $F_{i+i}/F_{i}$ the order function $i_{G_{b_{i}}/G_{b_{i+1}}}$ satisfies, for $\sigma\in G_{b_i},$ the following:
\end{minipage}
\noindent
\begin{minipage}[r]{0.4\textwidth}
\[
\xymatrix{
  F=F_{h+1}=F_{h}(f_h)\ar@{-}[d]^{p^{n_h}} 
\ar@{-}@/_6pc/_{|G|}[dddd]
  \\
  F_{h}=F_{h-1}(f_{h-1}) \ar@{-}[d]^{p^{n_{h-1}}} \\
  F_{h-1}  \ar@{.}[d] \\
F_2=F_1(f_1) \ar@{-}[d]^{p^{n_1}}\\
  F_1
   \ar@{-}[d] \\
  K
}
\]
\end{minipage}

 \[
  i_{\frac{G}{G_{b_j}}}(\sigma G_{b_j})\,=\,
  \begin{cases}
      \infty, & \text{if $j\leq i$}\\
            i_G(\sigma), & \text{if $j>i$}
     \end{cases}
     \quad\text{ and }\quad
      i_{\frac{G_{b_i}}{G_{b_{i+1}}}}(\sigma|_{F_{i+1}})\,=\,
  \begin{cases}
      \infty, & \text{if $i_G(\sigma)> b_i$}\\
            b_i, & \text{if $i_G\left(\sigma\right)=b_i$}
     \end{cases}
  \]
See \cite[IV.1 prop. 3 \& the corollary after]{serre2013local}.

\begin{proposition}\label{prop:fiisnotinFi-1}
  For each $i=1,\dots,h$ the element $f_{i}\in F_{i+1}$ is \emph{not} in $F_{i}.$
\end{proposition}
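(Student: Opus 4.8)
The plan is to argue by a valuation obstruction: if $f_i$ were in $F_i$, then its valuation $v_{i+1}(f_i) = -b_i$ would have to be divisible by the ramification index $[F_{i+1}:F_i] = p^{n_i}$, since $v_{i+1}$ restricted to $F_i$ is $p^{n_i} \cdot v_i$ and takes values in $p^{n_i}\mathbb{Z}$ on elements of $F_i$. But $b_i$ is assumed prime to $p$ (we are in the case where $p$ does not divide any ramification jump), so $p^{n_i} \nmid -b_i$ because $n_i \geq 1$. This contradiction shows $f_i \notin F_i$.

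Concretely, I would first recall from the numerical relations collected above that $v_{i+1}(x) = p^{n_i} v_i(x)$ for every $x \in F_i$, and that $v_i$ is the normalized (surjective onto $\mathbb{Z}$) valuation of $F_i$; hence $v_{i+1}(F_i^\times) = p^{n_i}\mathbb{Z}$. Then I would observe that $v_{i+1}(f_i) = -b_i$ by the defining (boxed) property of $f_i$. If $f_i \in F_i$ (and $f_i \neq 0$, which holds since it has finite valuation), then $-b_i \in p^{n_i}\mathbb{Z}$, i.e. $p^{n_i} \mid b_i$. Since $n_i \geq 1$ this forces $p \mid b_i$, contradicting the running assumption $(p, b_i) = 1$ for all $i = 1, \dots, h$. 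Therefore $f_i \notin F_i$.

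I do not expect any real obstacle here; the only thing to be careful about is bookkeeping of which valuation is normalized on which field and making explicit that $F_{i+1}/F_i$ is totally ramified of degree $p^{n_i}$ with $n_i \geq 1$ — all of which is recorded in the structure-of-intermediate-fields discussion preceding the statement. The argument uses nothing beyond the relation $v_{i+1} = p^{n_i} v_i$ on $F_i$ and the coprimality of $b_i$ with $p$.
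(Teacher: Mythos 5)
Your proof is correct, but it takes a genuinely different route from the paper. You argue by a pure value-group obstruction: since $F_{i+1}/F_i$ is totally ramified of degree $p^{n_i}$ with $n_i\geq 1$, one has $v_{i+1}(F_i^\times)=p^{n_i}\Z$, so an element of valuation $-b_i$ with $(b_i,p)=1$ cannot lie in $F_i$. The paper instead proves nonmembership via the Galois action: it picks $\sigma\in G_{b_i}\setminus G_{b_{i+1}}$, so that $i_G(\sigma)=b_i$, and applies Lemma \ref{thm:mainlemma} (through the restriction formulas for the order function) to get $v_{i+1}[\sigma(f_i)-f_i]=-b_i+b_i=0\neq\infty$, hence $\sigma(f_i)\neq f_i$ and $f_i\notin F_i=F_{i+1}^{\,G_{b_i}/G_{b_{i+1}}}$. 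Both arguments rest on the standing assumption that the jumps are prime to $p$ (yours through $p^{n_i}\nmid b_i$, the paper's because Lemma \ref{thm:mainlemma} requires $v_{i+1}(f_i)$ prime to $p$). Your version is more elementary and self-contained, needing nothing beyond the relation $v_{i+1}|_{F_i}=p^{n_i}v_i$; the paper's version is less economical but exhibits an explicit automorphism moving $f_i$ and computes $v_{i+1}(\sigma(f_i)-f_i)$ exactly, which rehearses precisely the Sen-type mechanism (Lemma \ref{thm:mainlemma} together with the order-function bookkeeping) that the rest of the paper relies on.
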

\begin{proof}
Let $\sigma\in G_{b_{i}}\backslash G_{b_{i+1}}$ so $i_G(\sigma)=b_{i}.$ Restricting  $\sigma$ to $F_{i+1}$ and considering the previous remark, we see that equality of lemma \ref{thm:mainlemma} applies and we have:
\[
  v_{i+1}[\sigma(f_{i})-f_{i}]=-b_i+i_{\frac{G_{b_{i}}}{G_{b_{i+1}}}}(\sigma|_{F_{i+1}})=-b_i+b_i=0\neq \infty,
\]
therefore $\sigma(f_{i})\neq f_{i}.$
\end{proof}


\subsection{Minimality condition}\label{subsuse:minimalitycondition}
We want to establish a ``unique form'' for the monomials $\kappa f_1^{\lambda_1}\cdots f_h^{\lambda_h}$ for $\kappa\in F_1,$ with respect to their  valuation. This is the analog of the uniqueness of the base-$p$ expansion of rational numbers. The following is a generalization of \cite[lemma 2]{MR4194180}. It also appeared in \cite{Madden78}.


\begin{lemma}\label{lem:differentvaluations}
  Let $k\leq h.$ For $0<i\leq k,$ let $\ell_i,$ $w_i\in\Z$ and suppose that
  for $i=\bm{1},\dots,k$ we have  $ 0\leq \ell_i,w_i\lneq p^{n_i}=[F_{i+1}:F_i]$ and:
\[
  \ell_0p^{\sum_{1}^{k}n_i}+\sum_{i=1}^{k-1}\ell_ip^{n_{\bm{i+1}}+\dots+n_k}b_i+\ell_kb_k
  =
  w_0p^{\sum_{1}^{k}n_i}+\sum_{i=1}^{k-1}w_ip^{n_{\bm{i+1}}+\dots+n_k}b_i+w_kb_k,
\]
then $\quad (\ell_{0},\dots,\ell_k)=(w_{0},\dots,w_k).$
\end{lemma}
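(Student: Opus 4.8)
The plan is to treat this as a purely arithmetic statement and prove it by induction on $k$, stripping off the ``least significant digit'' at each stage, exactly as in the proof of uniqueness of base-$p$ expansions. Put $c_i=\ell_i-w_i$ for $i=0,\dots,k$; the hypothesis rearranges to
\[
c_0\,p^{n_1+\dots+n_k}+\sum_{i=1}^{k-1}c_i\,p^{n_{i+1}+\dots+n_k}b_i+c_kb_k=0,
\]
where $|c_i|<p^{n_i}$ for $i=1,\dots,k$ and $c_0\in\Z$, and the goal is to show every $c_i=0$.

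For the base case $k=1$ the equation reads $c_0p^{n_1}+c_1b_1=0$. Reducing modulo $p^{n_1}$ and using that $(b_1,p)=1$ (recall that in this subsection every jump is prime to $p$), so $b_1$ is invertible modulo $p^{n_1}$, we get $c_1\equiv 0\pmod{p^{n_1}}$; since $|c_1|<p^{n_1}$ this forces $c_1=0$, and then $c_0p^{n_1}=0$ gives $c_0=0$.

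For the induction step $k\geq 2$, note that since each $n_j\geq 1$, every summand except $c_kb_k$ is divisible by $p^{n_k}$ (indeed $n_1+\dots+n_k\geq n_k$ and $n_{i+1}+\dots+n_k\geq n_k$ for $1\leq i\leq k-1$). Reducing the displayed equation modulo $p^{n_k}$ therefore yields $c_kb_k\equiv 0\pmod{p^{n_k}}$; as $(b_k,p)=1$, $b_k$ is a unit modulo $p^{n_k}$, so $c_k\equiv 0\pmod{p^{n_k}}$, and $|c_k|<p^{n_k}$ forces $c_k=0$. Dropping the now vanishing last term and dividing the remaining identity by $p^{n_k}$ produces
\[
c_0\,p^{n_1+\dots+n_{k-1}}+\sum_{i=1}^{k-2}c_i\,p^{n_{i+1}+\dots+n_{k-1}}b_i+c_{k-1}b_{k-1}=0,
\]
which is exactly the hypothesis of the lemma for the index $k-1$ (same $c_i$, same $b_i$, same bounds $|c_i|<p^{n_i}$). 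By the inductive hypothesis $c_0=c_1=\dots=c_{k-1}=0$, and together with $c_k=0$ we conclude $(\ell_0,\dots,\ell_k)=(w_0,\dots,w_k)$.

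I do not expect a genuine obstacle: the only point needing care is the exponent bookkeeping when dividing by $p^{n_k}$, i.e.\ checking that the power of $p$ attached to $c_0$ drops from $n_1+\dots+n_k$ to $n_1+\dots+n_{k-1}$ and that attached to $c_i$ from $n_{i+1}+\dots+n_k$ to $n_{i+1}+\dots+n_{k-1}$, so that the reduced identity is literally the $(k-1)$-instance (for $k=2$ the middle sum is empty and one lands in the base case). It is worth flagging explicitly that the standing hypothesis $(p,b_i)=1$ is precisely what is being used — to invert $b_i$ modulo a power of $p$; without it uniqueness fails, which is why the case $p\mid b_i$ was disposed of separately earlier. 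Alternatively one could phrase the same argument valuation-theoretically inside $F_{k+1}$, using $v_{k+1}(f_i)=-p^{n_{i+1}+\dots+n_k}b_i$ and $v_{k+1}(\pi_{F_1})=p^{n_1+\dots+n_k}$, but the elementary inductive version above is shorter and self-contained.
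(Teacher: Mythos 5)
Your proof is correct and follows essentially the same route as the paper: isolate the last term, use that $b_k$ is prime to $p$ together with the bound $|\ell_k-w_k|<p^{n_k}$ to conclude $\ell_k=w_k$, then cancel $p^{n_k}$ and repeat. The only difference is that you package the ``repeat'' step as a formal induction on $k$, which is a harmless (indeed slightly cleaner) presentation of the paper's argument.
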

\begin{proof}
  
 Recall that all $b_i$ are prime to $p$. We start from the end, i.e. for $i=k:$ 
\[
  p^{n_{k}}\left(\ell_0p^{\sum_{1}^{{k-1}}n_i}+\sum_{i=1}^{{k-1}}\ell_ip^{n_{\bm{i+1}}+\dots+n_{k-1}}b_i
  -
  w_0p^{\sum_{1}^{{k-1}}n_i}-\sum_{i=1}^{{k-1}}w_ip^{k-i}b_i\right)
  =
  (w_k -\ell_k)b_k.
\]
Since $0\leq \ell_k,w_k<p^{n_k},$ it must be $\ell_k=w_k.$ The same argument applies after cancelling out the powers of $p$ and repeating.

\end{proof}

\begin{lemma}
Let $\ell_0,w_0\in F_1$ and for $i\in \{1,\dots,h\},$ let $\ell_0f_1^{\ell_1}\cdots f_i^{\ell_i}$ and  $w_0f_1^{w_1}\cdots f_i^{w_i}\in F_{i+1}$ such that:
   \begin{itemize}
   \item $v_{i+1}(\ell_0f_1^{\ell_1}\cdots f_i^{\ell_i})=v_{i+1}(w_0f_1^{w_1}\cdots f_i^{w_i})$ and
   \item $0\leq \ell_j,w_j\lneq p^{n_{j}}=[F_{j+1}:F_j]$ for all $j\in\{1,\dots,i\}.$
\end{itemize}

    Then $v_{\bm{F_1}}(\ell_0)=v_{\bm{F_1}}(w_0)$ and $\ell_j=w_j$ for all $j=1,\dots i.$ 
\end{lemma}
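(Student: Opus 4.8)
The plan is to reduce the statement directly to Lemma~\ref{lem:differentvaluations} by unwinding the normalizations of the valuations $v_j$. I may assume $\ell_0,w_0\neq 0$: if $\ell_0=0$ then the left–hand valuation is $+\infty$, which forces $w_0=0$ as well (the $f_j$ being nonzero), and then $v_{F_1}(\ell_0)=+\infty=v_{F_1}(w_0)$ with nothing of substance to prove. So from now on $v_{F_1}(\ell_0),v_{F_1}(w_0)\in\Z$.

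Next I would record the two normalization identities already collected in the text (the bulleted list just before the boxed definition of the $f_i$): for $1\le j\le i$ one has $v_{i+1}|_{F_{j+1}}=p^{n_{j+1}+\dots+n_i}\,v_{j+1}$, where the exponent sum is empty (hence the factor is $1$) when $j=i$, and $v_{i+1}|_{F_1}=p^{n_1+\dots+n_i}\,v_{F_1}$. Since $f_j\in F_{j+1}\subseteq F_{i+1}$ with $v_{j+1}(f_j)=-b_j$ by definition, these give $v_{i+1}(f_j^{\ell_j})=-\ell_j\,p^{n_{j+1}+\dots+n_i}\,b_j$ for every $j\in\{1,\dots,i\}$ and $v_{i+1}(\ell_0)=p^{n_1+\dots+n_i}\,v_{F_1}(\ell_0)$. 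Adding these contributions,
\[
  v_{i+1}\!\left(\ell_0 f_1^{\ell_1}\cdots f_i^{\ell_i}\right)
  = p^{n_1+\dots+n_i}\,v_{F_1}(\ell_0)-\ell_i b_i-\sum_{j=1}^{i-1}\ell_j\,p^{n_{j+1}+\dots+n_i}\,b_j,
\]
and the analogous identity holds for the $w$'s.

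Finally I would equate the two expressions, multiply through by $-1$, and observe that the resulting identity is exactly the hypothesis of Lemma~\ref{lem:differentvaluations} with $k=i$, the role of its $\ell_0$ (resp.\ $w_0$) played by the integer $-v_{F_1}(\ell_0)$ (resp.\ $-v_{F_1}(w_0)$) and the role of its $\ell_j,w_j$ played by the present $\ell_j,w_j$; the constraints $0\le \ell_j,w_j<p^{n_j}$ for $j=1,\dots,i$ are precisely the second bullet of the hypothesis. Lemma~\ref{lem:differentvaluations} then yields $-v_{F_1}(\ell_0)=-v_{F_1}(w_0)$ and $\ell_j=w_j$ for all $j=1,\dots,i$, which is the assertion. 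I do not expect a genuine obstacle: the only point requiring care is bookkeeping the exponents $n_{j+1}+\dots+n_i$ when passing between the valuations of the different floors $F_{j+1}$ and $F_{i+1}$, and making sure the $j=i$ term carries a trivial power of $p$, so that the leading coefficient is $b_i$ itself and the identity matches the shape of Lemma~\ref{lem:differentvaluations}.
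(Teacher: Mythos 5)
Your proof is correct and follows essentially the same route as the paper: expand both valuations in $v_{i+1}$ using $v_{i+1}(f_j)=-p^{n_{j+1}+\dots+n_i}b_j$ and $v_{i+1}(\ell_0)=p^{n_1+\dots+n_i}v_{F_1}(\ell_0)$, then feed the resulting identity into Lemma~\ref{lem:differentvaluations}. The extra touches (the trivial $\ell_0=0$ case and multiplying by $-1$ so the signs literally match the lemma's statement) are fine but do not change the argument.
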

\begin{proof}Recall that $v_{i+1}(f_i)=-b_i.$ Write:\begin{align*}
    v_{i+1}(\ell_0f_1^{\ell_1}\cdots f_i^{\ell_i})&\,
    =
    \,v_{i+1}(w_0f_1^{w_1}\cdots f_i^{w_i})
    \Leftrightarrow \\
  p^{n_1+\dots+n_{i}}v_{\bm{F_1}}(\ell)-\ell_1p^{n_2+\dots+n_{i}}b_1-\dots-\ell_ib_i
    &=
    p^{n_1+\dots+n_{i}}v_{\bm{F_1}}(b)-w_1p^{n_2+\dots+n_{i}}b_1-\dots-w_ib_i,
  \end{align*}
  and apply lemma \ref{lem:differentvaluations}.
\end{proof}

\begin{corollary}\label{cor:differentvaluations}
  If $\ell_0f_1^{\ell_1}\cdots f_i^{\ell_i},$  $w_0f_1^{w_1}\cdots f_i^{w_i}\in F_{i+1},$ $i>0,$ satisfying the requirements of the previous lemma and $\ell_j\neq w_j$ for any $j=\bm{1},\dots,i$ then \emph{their valuations in $F_{i+1}$ differ.}
\end{corollary}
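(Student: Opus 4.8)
The plan is to derive Corollary \ref{cor:differentvaluations} as a direct consequence of the preceding lemma. Concretely, the previous lemma asserts that if two monomials $\ell_0f_1^{\ell_1}\cdots f_i^{\ell_i}$ and $w_0f_1^{w_1}\cdots f_i^{w_i}$ in $F_{i+1}$ have equal valuations and the exponents satisfy $0\le \ell_j,w_j<p^{n_j}$ for all $j$, then necessarily $v_{F_1}(\ell_0)=v_{F_1}(w_0)$ and $\ell_j=w_j$ for every $j=1,\dots,i$. The corollary is simply the contrapositive of the ``$\ell_j=w_j$'' half of this conclusion: if the exponent vectors differ in some coordinate $j$, then the hypothesis of equal valuations must fail, i.e. the two valuations differ.

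First I would restate the setup: assume the two monomials lie in $F_{i+1}$ with $i>0$ and that the exponents obey the bound $0\le \ell_j,w_j<p^{n_j}$, so that the previous lemma is applicable. Then I would argue by contradiction: suppose their valuations in $F_{i+1}$ were equal. Then both bullet hypotheses of the previous lemma hold, so its conclusion gives $\ell_j=w_j$ for all $j=1,\dots,i$, contradicting the assumption that $\ell_j\ne w_j$ for some $j$. Hence the valuations must differ. The only genuinely ``moving part'' is making sure the hypotheses of the previous lemma are exactly those the corollary assumes — and they are, by construction.

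There is essentially no obstacle here; this is a formal restatement. The one thing worth a line of care is noting that the conclusion of the previous lemma about the $\ell_j$'s being equal does not need the $\ell_0,w_0$ part at all, so the contrapositive in the form stated (differ in some exponent $\Rightarrow$ valuations differ) is precisely what comes out. I will write it compactly.

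\begin{proof}
  This is the contrapositive of (the last part of) the previous lemma. Indeed, suppose that $\ell_0f_1^{\ell_1}\cdots f_i^{\ell_i}$ and $w_0f_1^{w_1}\cdots f_i^{w_i}$ satisfy the hypotheses of that lemma, except possibly the equality of valuations, and assume $\ell_j\neq w_j$ for some $j\in\{1,\dots,i\}$. If the valuations $v_{i+1}(\ell_0f_1^{\ell_1}\cdots f_i^{\ell_i})$ and $v_{i+1}(w_0f_1^{w_1}\cdots f_i^{w_i})$ were equal, then both bullet conditions of the previous lemma would hold, and its conclusion would give $\ell_j=w_j$ for all $j=1,\dots,i$, a contradiction. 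Hence the two valuations in $F_{i+1}$ differ.
\end{proof}
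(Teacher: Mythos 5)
Your proof is correct and matches the paper's intent: the corollary is stated there without a separate argument precisely because it is the contrapositive of the preceding lemma (which in turn rests on Lemma \ref{lem:differentvaluations}), and your contradiction argument is exactly that reading made explicit.
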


Set $\min_{f_i,F_{i}}(X)\in F_{i}[X]$ for the minimal polynomial of $f_i$ over $F_{i}.$ By prop. \ref{prop:fiisnotinFi-1}, $\deg \min_{f_i,F_{i}}(X)>0$ and 
since $F_{i+1}/F_{i}$ is elementary abelian, the degree of the minimal polynomial of $f_i$ is a power of $p.$

\begin{remark}
Last remark also holds in general: If $\ell_j\neq w_j$ for any $j=0,\dots,i$ then the valuations differ.
\end{remark}

\begin{theorem}\label{thm:defminfi}
  For all $i=1,\dots,h:\quad \deg\min_{f_i,F_{i}}(X)=p^{n_i}=[F_{i+1}:F_i].$
\end{theorem}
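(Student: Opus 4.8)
The plan is to prove the equality by showing that the intermediate field $L:=F_{i}(f_i)$ is all of $F_{i+1}$, via a valuation–divisibility argument. First I would record the structural facts already available: $F_{i+1}/F_i$ is totally ramified of degree $p^{n_i}$, being elementary abelian with $\Gal(F_{i+1}/F_i)\simeq (\Z/p\Z)^{n_i}$; consequently the tower $F_i\subseteq L\subseteq F_{i+1}$ consists of totally ramified extensions, and for the top one the ramification index $e'$ of $F_{i+1}/L$ satisfies $e'=[F_{i+1}:L]$. In particular $e'$ divides $[F_{i+1}:F_i]=p^{n_i}$, so $e'$ is a power of $p$ (this also matches the earlier observation that $\deg\min_{f_i,F_i}(X)=[L:F_i]$ is a power of $p$).

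Next I would use that $v_{i+1}$ restricted to $L$ equals $e'\cdot v_L$, where $v_L$ denotes the normalized valuation of $L$. Since $f_i\in L$ by construction of $L$, and $v_{i+1}(f_i)=-b_i$ by definition of $f_i$, this gives
\[
  v_L(f_i)=\frac{v_{i+1}(f_i)}{e'}=-\frac{b_i}{e'}\in\Z,
\]
hence $e'\mid b_i$.

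Finally I would invoke the standing hypothesis of this part of the paper, namely $(p,b_i)=1$ for all $i=1,\dots,h$. Combined with the fact that $e'$ is a power of $p$, this forces $e'=1$, i.e.\ $[F_{i+1}:L]=1$, so $L=F_{i+1}$ and therefore $\deg\min_{f_i,F_i}(X)=[F_{i+1}:F_i]=p^{n_i}$, as claimed. (Proposition~\ref{prop:fiisnotinFi-1} is recovered as the weaker statement $L\neq F_i$, i.e.\ $e'\neq p^{n_i}$ when $n_i\geq1$.)

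I do not expect a genuine obstacle here; the argument is short once one sees that the right object to control is the ramification index $e'$ of $F_{i+1}/F_i(f_i)$. The only point that needs care is the systematic use of total ramification throughout the tower, so that ``ramification index'' and ``degree'' coincide for every intermediate field; the conclusion then follows purely from the arithmetic clash between $e'$ being a power of $p$ and $b_i$ being prime to $p$.
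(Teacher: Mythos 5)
Your proof is correct, and it takes a genuinely different route from the paper. The paper proves the theorem by induction on $i$: it expands $\min_{f_i,F_i}(f_i)=0$ in the monomial basis $\kappa f_1^{\ell_1}\cdots f_{i-1}^{\ell_{i-1}}f_i^{\ell_i}$ and uses the valuation-separation statement (corollary \ref{cor:differentvaluations}, resting on lemma \ref{lem:differentvaluations}) to show that the unique minimal-valuation term on the left must have $\ell_i=p^{n_i}$. You instead control the ramification index $e'$ of $F_{i+1}/F_i(f_i)$ directly: since the whole tower is totally ramified, $e'=[F_{i+1}:F_i(f_i)]$ is a power of $p$, while $e'\mid v_{i+1}(f_i)=-b_i$ and $(b_i,p)=1$ force $e'=1$, so $F_i(f_i)=F_{i+1}$. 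This is shorter, non-inductive, and needs less structure (only total ramification, $p$-power degree, and the jump prime to $p$ --- the elementary abelian hypothesis plays no role), and it indeed subsumes proposition \ref{prop:fiisnotinFi-1}. What the paper's longer argument buys is its byproduct: in the course of the induction it also identifies the minimal-valuation monomial of the constant term $D_i$, i.e.\ it establishes remark \ref{rmk:slicingargument} and equation \ref{eq:generalminimalpolynomial}, which is exactly what the main theorem \ref{thm:hassearfstructure} consumes; with your route that structural statement is not automatic and would still require a short separate argument (apply corollary \ref{cor:differentvaluations} to $D_i\in F_i$ written in the monomial basis, and rule out the case that the minimal term lies in $F_1$, as the paper does). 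So as a proof of the stated theorem your argument is complete and arguably cleaner, but the paper's proof is doing double duty for the later sections.
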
\begin{proof}
  We have established that the degree of the minimal polynomial is a power of $p,$ smaller than or equal to the degree of the corresponding elementary abelian extension. Set $D_i:=-\min_{f_i,F_{i}}(0)\in F_i.$ We use induction:
  \begin{description}
    
    \item[$\bm{i=1}$:] $\min_{f_1,F_{1}}(f_1)=0$ so that $\sum_{j=1}^{p^a}\ell_if_1^{j}=D_1\in F_1.$ The term on the left side of the last equation features only positive powers of $f_1,$ therefore, by corollary \ref{cor:differentvaluations}, it has a minimal term with respect to the valuation $v_2$ of $F_2$, say $\ell_jf_1^j.$ Evaluating the last equation yields: 
    \begin{equation*}
      v_2(\ell_j)+jv_2(f_1)=v_2(D_1)\Leftrightarrow p^{n_1}v_{\bm{F_1}}(\ell_j)-jb_1=p^{n_1}v_{\bm{F_1}}(D_1).
    \end{equation*}
    Since $0<j\leq p^{n_1}$ and no term has infinite valuation, lemma \ref{lem:differentvaluations} yields that $j=p^{n_1}.$

    \item[$\bm{i}$:] Suppose it holds for $i-1.$ Then $F_{j}=F_{j-1}(f_{j-1})$ for $2\leq j\leq i.$ This means that any element of $F_{i}$ can be written as a sum of monomials $\ell f_1^{\ell_1}\cdots f_{i-1}^{\ell_{i-1}}$ with $\ell\in F_1$ and $0\leq \ell_j<p^{n_j}$ for all $j=1,\dots,i-1.$ Write $\min_{f_i,F_{i}}(f_i)+D_i=D_i.$ The term  $\min_{f_i,F_{i}}(f_i)+D_i$ features only positive powers of $f_i$ so, by corollary \ref{cor:differentvaluations}, it has a minimal element with respect to the valuation $v_{i+1}$ of $F_{i+1},$ call it $\ell f_1^{\ell_1}\cdots f_{i-1}^{\ell_{i-1}}f_{i}^{\ell_i}.$ Also the constant term $D_i$ is in $F_{i}=F_{i-1}(f_{i-1})$ and we can write it as:
    \[
    D_i
    =
    \kappa+\sum_{(w_1,\dots,w_{i-1})\neq (0,\dots,0)}\kappa'f_1^{w_1}\cdots f_{i-1}^{w_{i-1}}
    =
    \kappa + \mathcal{Pi}\text{ with }\kappa\in F_1,\,\mathcal{P}\in F_i\backslash F_1.
    \]
    By corollary \ref{cor:differentvaluations}, the valuations of $\kappa$ and $\mathcal{Pi}$ 
    in $F_{i+1}$ differ, therefore the valuation $v_{_i+1}(D_i)$ is the minimum of them. If $v_{i+1}(D_i)$ was $v_{i+1}(\kappa)$ then the equality $v_{i+1}(\ell f_1^{\ell_1}\cdots f_{i-1}^{\ell_{i-1}}f_{i}^{\ell_i})=v_{i+1}(\kappa)$ would contradict corollary \ref{cor:differentvaluations} in case $\ell_i<p^{n_i}$ since it is also positive. In case $\ell_i=p^{n_i}$ then $\ell_1=\dots=\ell_{i-1}=0$ and we would have $v_{i+1}(f_i^{p^{n_i}})=v_{i+1}(\kappa)$ which again leads to contradiction since it would give $-p^{n_i}b_i=p^{n_1+\dots+n_{i}}v_{\bm{F_1}}(\kappa)$ with $i>1$ while $(b_i,p)=1.$ Therefore we have:
    \[
    v_{i+1}(\ell f_1^{\ell_1}\cdots f_{i-1}^{\ell_{i-1}}f_{i}^{\ell_i})
    =
     v_{i+1}(\min_{f_i,F_i}(f_i)+D_i)=v_{i+1}(D_i)=v_{i+1}(\kappa'f_1^{w_1}\cdots f_{i-1}^{w_{i-1}}),
    \]
    where, $\ell_i$ in the left side and at least one of the $w_j$ in the right side are nonzero. We have explained that for $j=1,\dots,i-1:$ $0\leq\ell_j,w_j<p^{n_j}$ and $\ell_i$ is positive and at most $p^{n_i}=[F_{i+1}:F_i].$ If $\ell_i$ was less than $p^{n_i},$ corollary \ref{cor:differentvaluations} would lead to a contradiction, therefore $\ell_i$ must be $p^{n_i}.$  \noindent\end{description}\end{proof}

\begin{remark}\label{rmk:slicingargument}
  By the 
  second part of the last proof, we have established the following: For all the $i$'s \emph{except} $i=1,$ the constant term $D_i$ of the minimal polynomial of $f_i$ over $F_i$ has a minimal monomial term which is: 
  $$D_i=\kappa f_{1}^{w_1}\cdots f_{i-1}^{w_{i-1}}+\{\text{greater valuation terms}\},$$
  for some $\kappa\in F_1,$ at least one exponent is nonzero and $0\leq w_j\lneq p^{n_j}$ for all $j=1,\dots,w_{i-1}.$ The minimal polynomial evaluated at $f_i$ gives the equality:
   \begin{equation}\label{eq:generalminimalpolynomial}
     f_i^{p^{n_i}}+\,\aprod\, =\kappa_i f_{1}^{w_1}\cdots f_{i-2}^{w_{i-2}}f_{i-1}^{w_{i-1}}+\,\aprod\,
   \end{equation}
   where both ``$\aprod$'' mean greater valuation terms, $\kappa_i\in F_1\backslash\{0\}$ and $(0,\dots,0)\lneq (w_1,\dots,w_{i-1})\leq (p^{n_1}-1,\dots,p^{n_{i-1}}-1).$
\end{remark}

\section{The Hasse - Arf property}\label{subsec:structuralthm}\label{sec:Hassearf}


\vspace{.3cm}
As before, $F/K$ is a finite Galois extension of complete discrete valuation fields, totally ramified with residue field of positive characteristic $p.$

\subsection{The Hasse - Arf theorem}
 Let $b_1,\dots,b_h$ be the jumps of the ramification filtration in lower numbering. Set $\phi:[-1,\infty)\to [-1,\infty)$ by
\begin{equation}\label{def:phifunction}
  \phi(u)=\int_{0}^u\frac{dt}{[G_0:G_t]}\text{ for }u\geq 0
\end{equation}

and $\phi(u)=u$ for $u\in [-1,0].$ In the particular case where the jumps are $(b_1,\dots,b_h),$ setting $b_0=-1$ and $u=b_i$ we have:
\[
  \phi(b_i)+1=\frac{1}{|G_{b_1}|}\sum_{\lambda=0}^{i-1}(b_{\lambda+1}-b_{\lambda})|G_{b_{\lambda+1}}|.
\]


Define $G^{\phi(i)}:=G_i$. Then the ramification filtration of $\Gal{(F/K)}$ \emph{in upper numbering} is:
\[
  G^{-1}=G\geq G^0=G_0= G^{\phi(b_1)}>\ldots>G^{\phi(b_{h})}>1.
\]
The upper numbering allows for the calculation of the ramification groups of $\Gal{(F/K)}/H$ due to the formula:
\[
  \left(\frac{\Gal{(F/K)}}{H}\right)^i=\frac{G^iH}{H}.
\]
\begin{theorem}[Hasse - Arf]
  If $\,\Gal{(F/K)}$ is abelian then the jumps ramification filtration in upper numbering are integers.
\end{theorem}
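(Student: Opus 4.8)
The plan is to prove the Hasse--Arf theorem by reducing from a general abelian extension to the cyclic case, and then handling cyclic extensions by an explicit computation of the ``jump'' using the norm map together with the defining equations of the intermediate fields established in Theorem~\ref{thm:defminfi} and Remark~\ref{rmk:slicingargument}. First I would record the standard reductions: replacing $K$ by the largest unramified subextension and then by the fixed field $F_1 = F^{G_1}$, so that $G = \Gal(F/K)$ is a finite abelian $p$-group (tame ramification only shifts the bottom jump to $0$, which does not affect whether the \emph{upper} jumps are integers, as already noted in the discussion preceding this section). Then, since a finite abelian group is a product of cyclic groups and the upper-numbering filtration behaves well under quotients via $(G/H)^i = G^i H/H$, it suffices to show that each \emph{cyclic} quotient $G/H$ has integral upper jumps; equivalently, it is enough to treat the case $G$ cyclic of order $p^h$.

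In the cyclic case $G \cong \Z/p^h\Z$, each $n_i = 1$, so $F_{i+1}/F_i$ is cyclic of degree $p$ and the tower $K \le F_1 \lneq \cdots \lneq F_{h+1} = F$ is the unique chain of subfields. The key relation is Herbrand's formula / the Hasse--Arf characterization: writing $\psi = \phi^{-1}$, the upper jumps are $\phi(b_1) < \cdots < \phi(b_h)$, and I must show each $\phi(b_i) \in \Z$. Equivalently, by the displayed formula $\phi(b_i)+1 = \frac{1}{|G_{b_1}|}\sum_{\lambda=0}^{i-1}(b_{\lambda+1}-b_\lambda)|G_{b_{\lambda+1}}|$ with $|G_{b_\lambda}| = p^{h-\lambda+1}$ for $\lambda \ge 1$, this amounts to the congruence $b_i \equiv b_{i-1} \pmod{p^{i-1}}$ for each $i$ (inductively, integrality of $\phi(b_{i-1})$ plus this congruence gives integrality of $\phi(b_i)$, since the new term contributes $(b_i - b_{i-1})p^{h-i+1}/p^h = (b_i-b_{i-1})/p^{i-1}$). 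So the whole theorem comes down to the divisibility statement $p^{i-1} \mid b_i - b_{i-1}$.

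To extract this divisibility I would use the defining equation from Remark~\ref{rmk:slicingargument}: in the cyclic case, $f_i^{p} + \aprod = \kappa_i f_1^{w_1}\cdots f_{i-1}^{w_{i-1}} + \aprod$ with $\kappa_i \in F_1^\times$ and $0 \le w_j < p$, $v_{i+1}(f_j) = -b_j$. Comparing valuations in $F_{i+1}$ (using $v_{i+1} = p^{h-i}\cdots$ appropriately, or better, working in $F$ with $v_F$ and Lemma~\ref{lem:differentvaluations} to pin down the $w_j$ uniquely) gives $-p\,b_i = p^{\,?}\,v_{F_1}(\kappa_i) - \sum_j w_j \cdot(\text{appropriate power of }p)\cdot b_j$. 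This identity, read modulo successive powers of $p$ and combined with $(b_j,p)=1$, forces the $w_j$ to have a specific form and ultimately yields $b_i \equiv b_{i-1} \pmod{p^{i-1}}$; I would set this up as an induction on $i$, where the inductive hypothesis supplies the congruences among $b_1,\dots,b_{i-1}$ needed to simplify the valuation equation. An alternative, cleaner route is to invoke Lemma~\ref{thm:mainlemma} directly: for a generator $\sigma$ of $G$ whose image generates $G/G_{b_i}$, compute $v_{i+1}(\sigma(f_i) - f_i)$ two ways --- once via Lemma~\ref{thm:mainlemma} as $-b_i + i_{G/G_{b_i}}(\sigma) = -b_i + b_{i-1}$ (since the jump of $F_i/F_1$ is $b_{i-1}$ in the relevant numbering), and once by differentiating the defining equation $f_i^p = a_i \in F_i$ --- and match the two.

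The main obstacle, and the part requiring genuine care, is the bookkeeping in this last step: correctly tracking which normalized valuation ($v_i$ versus $v_{i+1}$ versus $v_{F_1}$) is in play, and correctly identifying the relevant ramification breaks of the \emph{subextensions} $F_i/F_1$ (which are $b_1,\dots,b_{i-1}$ in lower numbering for $F_i/F_1$, but these are the \emph{same} integers $b_j$ only after the right Herbrand transition). Getting the congruence $p^{i-1}\mid b_i - b_{i-1}$ to fall out cleanly hinges on feeding the inductive hypothesis (integrality of earlier $\phi(b_j)$, equivalently the earlier congruences) into the valuation identity before reducing modulo $p^{i-1}$; done in the wrong order one only gets $p \mid b_i - b_{i-1}$, which is too weak. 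I expect the cyclic-reduction step and the formula manipulations to be routine, and essentially all the content to live in this final divisibility extraction.
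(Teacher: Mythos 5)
There is a genuine gap, and it sits at the heart of your plan. Note first that the paper offers no proof of this statement at all: it is the classical Hasse--Arf theorem, quoted from the literature (Hasse, Arf; the standard proof runs through the norm map on the unit filtration for cyclic extensions, as in Serre, \emph{Local Fields}, V\,\S 7, or through local class field theory). Your proposal instead tries to extract the decisive divisibility $p^{i-1}\mid b_i-b_{i-1}$ from the valuation identity attached to equation \ref{eq:generalminimalpolynomial}, i.e.\ from the relation that becomes equation \ref{eq:jumpsdifference}. This cannot work: that identity holds for \emph{every} finite Galois, totally and wildly ramified extension with jumps prime to $p$, with no abelian or cyclic hypothesis whatsoever, and by Lemma \ref{lem:differentvaluations} it merely determines $v(\kappa_i)$ and the exponents $w_1,\dots,w_{i-1}$ uniquely \emph{from} the given jumps $b_1,\dots,b_i$; it imposes no constraint on the jumps themselves. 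Whether the $w_j$ collapse to $w_1=\dots=w_{i-2}=0$, $w_{i-1}=1$ is precisely \emph{equivalent} to the Hasse--Arf property --- that is the content of Theorem \ref{thm:hassearfstructure} --- so ``reading the identity modulo successive powers of $p$'' to force the $w_j$ into that shape assumes exactly what you must prove. Symptomatically, after your reduction step the argument never again uses commutativity (nor, in the computation, cyclicity); if it went through, it would prove integrality of the upper jumps for all totally wildly ramified extensions, which is false (non-abelian extensions, e.g.\ quaternion ones, famously have non-integral upper jumps). The genuinely arithmetic ingredient where abelianness enters --- classically the study of the norm map $N_{F_{i+1}/F_i}$ on the filtration of units, or class field theory, or Sen-type results --- is absent from the plan, and neither Lemma \ref{thm:mainlemma} nor ``differentiating $f_i^p=a_i$'' supplies it (the equation $f_i^p=a_i$ belongs to the other case, where $p$ divides the jumps, not the prime-to-$p$ case in play).

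Two smaller points. The reduction to the cyclic case needs more than ``an abelian group is a product of cyclic groups'': you must show each upper jump of $G$ is witnessed in some cyclic quotient, which requires choosing, for $\sigma$ realizing the jump, a subgroup $H$ maximal among those not containing $\sigma$ (so that $G/H$ is cyclic) and then checking the jump survives in $G/H$ via $(G/H)^u=G^uH/H$; this is a real argument, not a formality. And your translation ``Hasse--Arf $\Leftrightarrow p^{i-1}\mid b_i-b_{i-1}$'' in the cyclic case is fine, but it only relocates the problem; the divisibility is exactly the hard content, and your proposed source for it is the circular step described above.
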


In other words, if $\Gal{(F/K)}$ is abelian then $\phi(b_i)\in \Z$ for all $i=1,\ldots,h.$

\subsection{Main theorem}
We assume that the ramification jumps are prime to $p.$



\begin{theorem}\label{thm:hassearfstructure}
    $F/K$ has the Hasse - Arf property if and only if, for all $i>1,$ equation \ref{eq:generalminimalpolynomial} is:
     \begin{equation}\label{eq:Hassearfminimalpolynomial}
     \boxed{\quad
      f_i^{[F_{i+1}:F_i]}+\,\aprod
    =
    \kappa_i  f_{i-1}+\,\aprod
    }
    \end{equation}
for some  $\kappa_i\in K\backslash\{0\}$ and ``$\aprod$'' are greater valuation terms (therefore $[F_{i+1}:F_i]v_{i+1}(f_i)=v_{i+1}(\kappa f_{i-1})$).
\end{theorem}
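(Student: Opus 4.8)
First I would rephrase the Hasse--Arf property as a divisibility condition on the lower jumps. From the displayed formula for $\phi(b_i)+1$ (with $b_0=-1$) one reads off $\phi(b_1)=b_1\in\Z$ and, for $2\le i\le h$,
\[
\phi(b_i)-\phi(b_{i-1})=\frac{(b_i-b_{i-1})\,|G_{b_i}|}{|G_{b_1}|}=\frac{b_i-b_{i-1}}{p^{\,n_1+\cdots+n_{i-1}}},
\]
using $|G_{b_j}|=p^{\,n_j+\cdots+n_h}$. Since $\phi(b_1)$ is automatically an integer, an immediate induction on $i$ gives: $F/K$ has the Hasse--Arf property if and only if $p^{\,n_1+\cdots+n_{i-1}}\mid b_i-b_{i-1}$ for every $i=2,\dots,h$. (This also explains why the statement concerns only $i>1$.)

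Next I would translate equation \ref{eq:generalminimalpolynomial} and equation \ref{eq:Hassearfminimalpolynomial} into valuation identities in $F_{i+1}$, using $v_{i+1}(f_j)=-p^{\,n_{j+1}+\cdots+n_i}b_j$ for $j\le i$ (so $v_{i+1}(f_i^{[F_{i+1}:F_i]})=-p^{\,n_i}b_i$ and $v_{i+1}(f_{i-1})=-p^{\,n_i}b_{i-1}$) together with $v_{i+1}(\kappa)=p^{\,n_1+\cdots+n_i}v_{F_1}(\kappa)$ for $\kappa\in F_1$. Comparing leading valuations, the exponent vector $(w_1,\dots,w_{i-1})$ occurring in equation \ref{eq:generalminimalpolynomial} is characterised as the \emph{unique} vector with $0\le w_j<p^{n_j}$ for which some $\kappa\in F_1\setminus\{0\}$ satisfies $v_{i+1}(\kappa f_1^{w_1}\cdots f_{i-1}^{w_{i-1}})=-p^{\,n_i}b_i$ (and then $v_{F_1}(\kappa)$ is forced); this uniqueness is precisely corollary \ref{cor:differentvaluations} applied to monomials in $f_1,\dots,f_{i-1}$ inside $F_i\subseteq F_{i+1}$, which is legitimate since $i>1$.

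For the equivalence, suppose first that equation \ref{eq:generalminimalpolynomial} already has the special shape of equation \ref{eq:Hassearfminimalpolynomial}. Equating leading valuations gives $v_{i+1}(\kappa_i)=p^{\,n_i}(b_{i-1}-b_i)$; since $\kappa_i\in K=F_1$ forces $p^{\,n_1+\cdots+n_i}\mid v_{i+1}(\kappa_i)$, we obtain $p^{\,n_1+\cdots+n_{i-1}}\mid b_i-b_{i-1}$, and letting $i$ range over $2,\dots,h$ yields Hasse--Arf by the first paragraph. Conversely, assume Hasse--Arf, so that $m:=(b_{i-1}-b_i)/p^{\,n_1+\cdots+n_{i-1}}\in\Z$; choosing any $\kappa\in F_1$ with $v_{F_1}(\kappa)=m$ one computes $v_{i+1}(\kappa f_{i-1})=-p^{\,n_i}b_i$, so the vector $(0,\dots,0,1)$ realises the valuation $-p^{\,n_i}b_i$ and hence, by the uniqueness above, \emph{is} the vector $(w_1,\dots,w_{i-1})$ appearing in equation \ref{eq:generalminimalpolynomial}. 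Thus equation \ref{eq:generalminimalpolynomial} reduces to equation \ref{eq:Hassearfminimalpolynomial}, with the genuine coefficient $\kappa_i$ of the constant term $D_i$ having $v_{F_1}$-valuation $m$ and therefore lying in $K=F_1$.

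The step I expect to be the main obstacle is the uniqueness/characterisation in the second paragraph: one has to check that the ``$\aprod$'' (higher-valuation) terms in equation \ref{eq:generalminimalpolynomial} genuinely do not interfere with the leading monomial on either side, that corollary \ref{cor:differentvaluations} is invoked at the right level ($i-1$, whence the restriction $i>1$), and that the valuations $v_{i+1}$, $v_i$, $v_{F_1}$, the powers of $p$, and the jumps $b_j$ are tracked consistently throughout. Once that bookkeeping is pinned down, both implications are short.
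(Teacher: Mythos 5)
Your argument is correct and follows essentially the paper's own route: both reduce the Hasse--Arf property to the divisibility $p^{n_1+\cdots+n_{i-1}}\mid b_i-b_{i-1}$ for $i=2,\dots,h$ via $\phi$, and both compare the leading valuations in equation \ref{eq:generalminimalpolynomial} (equivalently equation \ref{eq:jumpsdifference}) using lemma \ref{lem:differentvaluations}. The only cosmetic difference is that where the paper forces $w_1=\dots=w_{i-2}=0$ and $w_{i-1}=1$ by reading equation \ref{eq:jumpsdifference} modulo powers of $p$, you reach the same conclusion by exhibiting the auxiliary monomial $\kappa f_{i-1}$ of valuation $-p^{n_i}b_i$ and invoking uniqueness of the exponent vector.
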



\begin{proof}[\textbf{Proof of theorem \ref{thm:hassearfstructure}}]
By the definition of $\phi$ in equation \ref{def:phifunction}, we can write for $i=2,\dots,h:$
\begin{align*}
  \phi(b_i)-b_1&=\frac{b_2-b_1}{[G:G_{b_2}]}+\dots+\frac{b_i-b_{i-1}}{[G:G_{b_i}]}=\frac{b_2-b_1}{[F_2:K]}+\dots+\frac{b_i-b_{i-1}}{[F_i:K]}=\\
  &=\frac{b_2-b_1}{[F_2:K]}+\frac{b_3-b_2}{[F_3:K]}+\dots+\frac{b_i-b_{i-1}}{[F_i:K]}.
\end{align*}
For $j=1,\dots, i$ we write $[F_j:K]=p^{n_1+\dots+n_{j-1}}$ for simplicity so the last equation is:
\[
   \phi(b_i)-b_1=\frac{b_2-b_1}{p^{n_1}}+\frac{b_3-b_2}{p^{n_1+n_2}}+\dots+\frac{b_i-b_{i-1}}{p^{n_1+\dots+n_{i-1}}}.
\]
By the general form of a totally ramified extension given in equation \ref{eq:generalminimalpolynomial}:
\[ 
  [F_{i+1}:F_i]v_{i+1}(f_i)=v_{i+1}(\kappa_i f_1^{w_1}\cdots f_{i-1}^{w_{i-1}}).
\]
Which is:
\begin{align*}
    -p^{n_i}b_i=p^{n_1+\dots+n_i}v_K(\kappa_i)-w_1p^{n_2+\dots+n_i}b_1-\dots-w_{i-1}p^{n_i}b_{i-1}
\end{align*}
Cancelling $p^{n_i}$, multiplying by $-1$ and subtracting $b_{i-1}$ from both sides gives:
\begin{equation}\label{eq:jumpsdifference}
    b_i-b_{i-1}=-p^{n_1+\dots+n_{i-1}}v_K(\kappa_i)+w_1p^{n_2+\dots+n_{i-1}}b_1+\dots+(w_{i-1}-1)b_{i-1}.
\end{equation}
Finally recall that $(0,\dots,0)<(w_1,\dots,w_i)\leq(p^{n_1}-1,\dots,p^{n_{i-1}}-1).$

\textbf{Assume $\bm{F/K}$ has the Hasse - Arf property.} Then $p^{n_1+\dots+n_{i-1}}\mid b_{i}-b_{i-1}$ for all $i>2.$ Then it divides the left part of equation \ref{eq:jumpsdifference}, which forces $w_1,\dots,w_{i-2}$ to be zero and $w_{i-1}=1.$

\textbf{Assume $\bm{F/K}$ satisfies condition \ref{eq:Hassearfminimalpolynomial}.} In that case equation \ref{eq:jumpsdifference} becomes $b_i-b_{i-1}=-p^{n_1+\dots+n_{i-1}}v_K(\kappa)$ and applying for all $j<i$ gives the result.
\end{proof}

\subsection{Some examples}

We present some examples of our theorem. One standard way to construct cyclic Galois extension of a field is through Witt theory. In \cite[VI, exercises 46-50]{Lang1993} 
is explained how an equation of the form $Y^n=Y+'X$ (where ``$+'$'' is performed in the Witt ring) corresponds to a Galois abelian extension of $k$ of degree $p^n$ provided that $x_0,$ the first component of $X,$  is not of the form $f^p-f$ for any $f\in k$. Additionally  Obus and Pries in \cite{Obus2010} wrote the formulas for extensions of degree $p^3$ over $k\left((t)\right)$. We will use these formulas to calculate the minimal term for the equation of each sub-extension and show that it verifies our theorem.

\begin{example}
Let $\mathrm{char} k=5$. We consider the extension of $k\left((x)\right)$ of degree $125$ given by the (truncated) Witt vector $(x,0,0)\in W_3(k\left((x)\right))$. The formulas for the extension are given in general form in \cite[example 6.1]{Obus2010}. Applied to our situation (with a slight change in the notation) the field generators are given by:
\begin{align}
  f_1^5-f_1=&x; \quad \quad\quad f_2^5-f_2=\frac{x^5+f_1^5-(x+f_1)^5}{5}:=D_2;\label{equationofy}\\
  f_3^5-f_3=&\frac{x^{25}+f_1^{25}-(x+f_1)^{25}}{25}+\frac{f_2^5-(f_2+\frac{x^5+f_1^5-(x+f_1)^5}{5})^5}{5}:=D_3.\label{equationofw}
\end{align}
This yields a Galois cyclic extension of $k\left((x)\right)$ of degree $125$, given here as a sequence of cyclic Artin-Schreier sub-extensions. Consider the extension of $k\left((x)\right)$ of degree $25.$ It is given by the first two equations, which after some calculations turn out to be:
\[
  f_1^5-f_1=x,\qquad f_2^5-f_2=-x^4f_1-2x^3f_1^2-2x^2f_1^3-xf_1^4.
\]
Denote $F_2=k\left((x)\right)(f_1)$ $F_3=F_2(f_2)$. 
 The constant term of the minimal polynomial of $f_2$ is $D_1:=x$ while $D_2:=-x^4f_1-2x^3f_1^2-2x^2f_1^3-xf_1^4.$ The valuations of the generators at each field 
  are:
\begin{center}
\begin{tabular}{||c | c c c||} 
 \hline
 Field & $v(x)$ & $v(f_1)$ & $v(f_2)$ \\ [0.5ex] 
 \hline\hline
 $k((x))$ & $-1$ &  &  \\ 
 \hline
 $F_1=k\left((x)\right)(f_1)$ & $-5$ & $-1$ &  \\
 \hline
 $F_2=k\left((x)\right)(f_1,f_2)$ & $-25$ & $-5$ & $-21$ \\
 \hline
\end{tabular}
\end{center}
The minimal term of $D_2$ with respect to the valuation of $F_2$ is $-x^4f_1$ (as predicted). The lower jumps of the ramification filtration are $b_1=1$ and $b_2=21$ and $5\mid 21-1.$ 

We now calculate the equation of $f_3.$ 
\begin{multline}
f_3^5-f_3=\label{makrinar}
 x \cdot f_1^4 \cdot f_2^4 - x^6 \cdot f_1^4 \cdot f_2^3 - 2 \cdot x^2 \cdot f_1^4 \cdot f_2^3 + 2 \cdot x^2 \cdot f_1^3 \cdot f_2^4+ 2 \cdot x^{11} \cdot f_1^4 \cdot f_2^2\notag \\
  + x^7 \cdot f_1^4 \cdot f_2^2 + 2 \cdot x^7 \cdot f_1^3 \cdot f_2^3 + 2 \cdot x^3 \cdot f_1^4 \cdot f_2^2 + 2 \cdot x^3 \cdot f_1^2 \cdot f_2^4 -x^{16} \cdot f_1^4 \cdot f_2  - 2 \cdot x^{12} \cdot f_1^4 \cdot f_2 \notag\\
   + 2 \cdot x^{12} \cdot f_1^3 \cdot f_2^2+ 2 \cdot x^8 \cdot f_1^3 \cdot f_2^2 - 2 \cdot x^8 \cdot f_1^2 \cdot f_2^3 - x^4 \cdot f_1^4 \cdot f_2 + x^4 \cdot f_1^3 \cdot f_2^2 + x^4 \cdot f_1^2 \cdot f_2^3 + x^4 \cdot f_1 \cdot f_2^4 \notag\\
     - x^{21} \cdot f_1^4 + x^{17} \cdot f_1^4+ 2 \cdot x^{13} \cdot f_1^4+ x^{13} \cdot f_1^3 \cdot f_2 - x^9 \cdot f_1^3 \cdot f_2 + x^9 \cdot f_1^2 \cdot f_2^2 - x^5 \cdot f_1^3 \cdot f_2 + 2 \cdot x^5 \cdot f_1^2 \cdot f_2^2 - x^5 \cdot f_1 \cdot f_2^3 \notag  \\
     - x \cdot f_1^4 - 2 \cdot x^{22} \cdot f_1^3+ 2 \cdot x^{18} \cdot f_1^3+2 \cdot x^{14} \cdot f_1^3 - x^{14} \cdot f_1^2 \cdot f_2 - x^{10} \cdot f_1^2 \cdot f_2 + x^{10} \cdot f_1 \cdot f_2^2 + 2 \cdot x^6 \cdot f_1^3 + x^6 \cdot f_1^2 \cdot f_2\notag \\
      - x^6 \cdot f_1 \cdot f_2^2 - 2 \cdot x^2 \cdot f_1^3 - 2 \cdot x^{23} \cdot f_1^2- x^{15} \cdot f_1^2 - 2 \cdot x^{11} \cdot f_1^2 + 2 \cdot x^{11} \cdot f_1 \cdot f_2 + x^{11} \cdot f_2^2 - 2 \cdot x^7 \cdot f_1^2 + x^7 \cdot f_2^2 \notag \\
       - 2 \cdot x^3 \cdot f_1^2 - \underline{x^{24} \cdot f_1} - 2 \cdot x^{20} \cdot f_1 + x^{16} \cdot f_1 + 2 \cdot x^{16} \cdot f_2+ x^{12} \cdot f_2- x^8 \cdot f_2 - x^4 \cdot f_1 + x^{17} - x^{13}.
\end{multline}
The calculations we performed were, after expanding all expressions in the right  side of eq. \ref{equationofw}, to reduce them modulo the equation of $f_1$. The equation of $f_2$ was not needed as no power of $f_2$ greater than $4$ appears. Then we reduced the coefficients modulo $5$. But this is \emph{not} the final equation of $f_3$. Indeed one can see that all monomials have degree less than or equal to $25$ and $x$ has the smallest valuation, therefore the minimal term is (the underlined) $x^{24}\cdot f_1.$ But in that case, the characteristic $5$ will divide the valuation of $f_3$ and therefore needs to be substituted. This is by \cite[lem. 3.7.7]{Stichtenothv2009} and also illustrated in \cite[example 5.8.8]{MR2241963}. Indeed we can set: $  f_3:=\bar{f}_3+x^4\cdot f_2.$

Then the extension is the same: $F_3(\bar{f}_3)=F_3(f_3)$ since $x^4\cdot f_2$ is in $F_3.$ 
Substitution gives:
\begin{align}
f_3^5-f_3=&\bar{f}_3^5+x^{20}f_2^5-\bar{f}_3-x^4f_2=\\
=&\bar{f}_3^5-\bar{f}_3+x^{20}(f_2-x^4f_1-2x^3f_1^2-2x^2f_1^3-xf_1^4)-x^4f_2=\\
=&\bar{f}_3^5-\bar{f}_3+x^{20}f_2-x^{24}f_1-2x^{23}f_1^2-2x^{22}f_1^3-x^{21}f_1^4-x^4f_2.
\end{align}

Therefore the equation of $\bar{f}_3$ is:
\begin{multline}
\bar{f}_3^5-\bar{f}_3=-\underline{x^{20}f_2}+\cancel{x^{24}f_1}+\cancel{2x^{23}f_1^2}+\cancel{2x^{22}f_1^3}+\cancel{x^{21}f_1^4}+x^4f_2\label{makrinar2}\\
 +x \cdot f_1^4 \cdot f_2^4 - x^6 \cdot f_1^4 \cdot f_2^3 - 2 \cdot x^2 \cdot f_1^4 \cdot f_2^3 + 2 \cdot x^2 \cdot f_1^3 \cdot f_2^4+ 2 \cdot x^{11} \cdot f_1^4 \cdot f_2^2\notag \\
  + x^7 \cdot f_1^4 \cdot f_2^2 + 2 \cdot x^7 \cdot f_1^3 \cdot f_2^3 + 2 \cdot x^3 \cdot f_1^4 \cdot f_2^2 + 2 \cdot x^3 \cdot f_1^2 \cdot f_2^4 -x^{16} \cdot f_1^4 \cdot f_2  - 2 \cdot x^{12} \cdot f_1^4 \cdot f_2 \notag\\
   + 2 \cdot x^{12} \cdot f_1^3 \cdot f_2^2+ 2 \cdot x^8 \cdot f_1^3 \cdot f_2^2 - 2 \cdot x^8 \cdot f_1^2 \cdot f_2^3 - x^4 \cdot f_1^4 \cdot f_2 + x^4 \cdot f_1^3 \cdot f_2^2 + x^4 \cdot f_1^2 \cdot f_2^3 + x^4 \cdot f_1 \cdot f_2^4 \notag\\
     - \cancel{x^{21} \cdot f_1^4} + x^{17} \cdot f_1^4+ 2 \cdot x^{13} \cdot f_1^4+ x^{13} \cdot f_1^3 \cdot f_2 - x^9 \cdot f_1^3 \cdot f_2 + x^9 \cdot f_1^2 \cdot f_2^2 - x^5 \cdot f_1^3 \cdot f_2 + 2 \cdot x^5 \cdot f_1^2 \cdot f_2^2 - x^5 \cdot f_1 \cdot f_2^3 \notag  \\
     - x \cdot f_1^4 - \cancel{2 \cdot x^{22} \cdot f_1^3}+ 2 \cdot x^{18} \cdot f_1^3+2 \cdot x^{14} \cdot f_1^3 - x^{14} \cdot f_1^2 \cdot f_2 - x^{10} \cdot f_1^2 \cdot f_2 + x^{10} \cdot f_1 \cdot f_2^2 + 2 \cdot x^6 \cdot f_1^3 + x^6 \cdot f_1^2 \cdot f_2\notag \\
      - x^6 \cdot f_1 \cdot f_2^2 - 2 \cdot x^2 \cdot f_1^3 - \cancel{2 \cdot x^{23} \cdot f_1^2}- x^{15} \cdot f_1^2 - 2 \cdot x^{11} \cdot f_1^2 + 2 \cdot x^{11} \cdot f_1 \cdot f_2 + x^{11} \cdot f_2^2 - 2 \cdot x^7 \cdot f_1^2 + x^7 \cdot f_2^2 \notag \\
       - 2 \cdot x^3 \cdot f_1^2 - \cancel{x^{24} \cdot f_1} - 2 \cdot x^{20} \cdot f_1 + x^{16} \cdot f_1 + 2 \cdot x^{16} \cdot f_2+ x^{12} \cdot f_2- x^8 \cdot f_2 - x^4 \cdot f_1 + x^{17} - x^{13}.
\end{multline}
Denote the right side of last equation with $D_3$ and it is easy to verify that $\min D_3=-x^{20}\cdot f_2$ (as expected). Indeed the only monomials with total degree at least $21$ are the following:
\[
  -x^{16}\cdot f_1^4\cdot f_2,\qquad x^{17}\cdot f_1^4,\qquad 2\cdot x^{18}\cdot f_1^3
\]
and one easily sees that $-x^{20}\cdot f_2$ has smaller valuation. Finally for $P$ being the place of $F_4$ above $P_3$, the valuations are as follows:
\[
  v_P(x)=-125,\qquad v_{P}(f_1)=-25 ,\qquad v_{P}(f_1)=-105 ,\qquad v_{P}(\bar{f_3})=-521.        
\]
The lower jumps of the ramification filtration are $b_1=1,b_2 =21 ,\, b_3 =521    $ and $p^2=25\mid 500=b_3-b_2.$

Finally the element $x^4\cdot z$ was chosen to substitute $w$ under the following course of thought: The valuation of $w$ is $-24\cdot 25+5=-605=-121\cdot 5.$ On the other hand any element of the form $w'+x^a\cdot y^b\cdot z^c$ yields the same extension of $F_3$ as does $w$ and the valuation of $x^{a}\cdot y^{b}\cdot z^{c}$ at $P$ is $-a\cdot 125-b\cdot 25-c\cdot 105$. Setting: $ a\cdot 125+b\cdot 25+c\cdot 105=121\cdot 5\Leftrightarrow a\cdot 25+b\cdot 5+c\cdot 21=121,$
we end up with a linear Diophantine equation where only positive solutions are allowed.

A generator of the Galois group can be selected such that it acts on the generators as follows:
\begin{align*}
\sigma(f_1)=f_1+1,\quad \sigma(f_2)=f_2+\frac{f_1^5+1-(f_1+1)^5}{5}\text{ and}
\end{align*}
\[
  \sigma(\bar{f}_3)=\bar{f}_3+\frac{f_1^{25}+1-(f_1+1)^{25}}{25}+\frac{f_2^5-(f_2+\frac{f_1^5+1-(f_1+1)^5}{5})^5}{5}
\]
\end{example}

\begin{example}
The following example comes from the same paper i.e. \cite[example 6.2]{Obus2010}. 
Let $p=2$. The following equations define a Galois cyclic extension of $k\left((x)\right)$ of degree $8$. 
\begin{align}
f_1^2-f_1=x,\quad\quad\quad
f_2^2-f_2=xf_1,\quad\quad\quad
f_3^2-f_3=x^3f_1+f_1^3x+xf_1f_2.\label{eqofw}
\end{align}
This extension is acquired by considering the Witt vector $(x,0,0)\in W_3(k\left((x)\right))$. As in the previous example, the valuation of $f_3$ is a multiple of the characteristic, namely $v(f_3)=14$. This time we define $\bar{f}_3$ by $w:=\bar{f}_3+xf_2$. Then equation (\ref{eqofw}) becomes:
\begin{align}
\bar{f}_3^2-\bar{f}_3&=-x^2f_2^2+xf_2+x^3f_1+f_1^3x+xf_1f_2=\notag\\ 
&=-\cancel{x^3f_1}+x^2f_2+xf_2+\cancel{x^3f_1}+f_1^3x+xf_1f_2.\label{equation:lastwittexample}
\end{align}
The right  side of the last equation is $D_3$ whose minimal term is $x^2f_2.$ 
\begin{center}
\begin{tabular}{||c || c| c| c| c||} 
 \hline
 Field & $v(x)$ & $v(f_1)$ & $v(f_2)$ &$v(\bar{f}_3)$ \\ [0.5ex] 
 \hline\hline
 $k((x))$ & $-1$ &  & & \\ 
 \hline
 $F_2=k\left((x)\right)(f_1)$ & $-2$ & $-1$ & & \\
 \hline
 $F_3=k\left((x)\right)(f_1,f_2)$ & $-4$ & $-2$ & $-3$& \\
 \hline
 $F_4=k\left((x)\right)(f_1,f_2,\bar{f}_3)$ & $-8$ & $-4$ & $-6$&$-11$\\
 \hline
\end{tabular}
\end{center}
The lower jumps of the ramification filtration are $b_1=1,\, b_2=3,\, b_3=11$ and  $ 2\mid b_2-b_1,\qquad 4\mid b_3-b_2.$


\end{example}

\Addresses
\end{document}